\theoremstyle{plain}
\newtheorem{theorem}{Theorem}
\newtheorem*{theorem-mainIntro}{Theorem~\ref{thm:main}}
\newtheorem{lemma}{Lemma}[section]
\newtheorem{proposition}[lemma]{Proposition}
\theoremstyle{definition}
\newtheorem{definition}[lemma]{Definition}
\newtheorem{question}[lemma]{Question}
\newtheorem{remark}[lemma]{Remark}
\newcommand{\Q}{\mathbb{Q}}
\newcommand{\R}{\mathbb{R}}
\newcommand{\cA}{\mathcal{A}}
\newcommand{\cF}{\mathcal{F}}
\newcommand{\cI}{\mathcal{I}}
\newcommand{\cN}{\mathcal{N}}
\newcommand{\cU}{\mathcal{U}}
\newcommand{\cV}{\mathcal{V}}
\newcommand{\cVm}{\mathcal{V}^\mathrm{m}}
\newcommand{\vr}[2]{\mathrm{VR}(#1;#2)}
\newcommand{\vrleq}[2]{\mathrm{VR}_\leq(#1;#2)}
\newcommand{\cech}[2]{\mathrm{\check{C}}(#1;#2)}
\newcommand{\cechm}[2]{\mathrm{\check{C}}^\mathrm{m}(#1;#2)}
\newcommand{\vrm}[2]{\mathrm{VR}^\mathrm{m}(#1;#2)}
\newcommand{\vrmleq}[2]{\mathrm{VR}^\mathrm{m}_\leq(#1;#2)}
\newcommand{\Km}{K^\mathrm{m}}
\newcommand{\cPfin}[1]{\ensuremath{\mathcal{P}}^\mathrm{fin}(#1)}
\newcommand{\cpl}{\ensuremath{\mathrm{Cpl}}}
\newcommand{\diam}{\mathrm{diam}}
\newcommand{\supp}{\mathrm{supp}}
\newcommand{\MCP}{\mathrm{MCP}}
\newcommand{\im}{\mathrm{im}}
\newcommand{\wM}{\widetilde{M}}
\newcommand{\tc}{\tilde{c}}
\title[Vietoris thickenings and complexes have isomorphic homotopy groups]{Vietoris thickenings and complexes have \\ isomorphic homotopy groups}
\author{Henry Adams}
\address[HA]{Department of Mathematics, Colorado State University, Fort Collins, CO 80523, USA}
\email{henry.adams@colostate.edu}
\author{Florian Frick}
\address[FF]{Dept.\ Math.\ Sciences, Carnegie Mellon University, Pittsburgh, PA 15213, USA \newline \indent Inst. Math., Freie Universit\"at Berlin, Arnimallee 2, 14195 Berlin, Germany}
\email{frick@cmu.edu}
\author{\v{Z}iga Virk}
\address[\v ZV]{Faculty of Computer and Information Science, Ve\v cna pot 113, SI-1000 Ljubljana, Slovenia}
\email{ziga.virk@fri.uni-lj.si}
\date{\today}
\keywords{Vietoris--Rips complexes, \v{C}ech complexes, metric thickenings, optimal transport, nerve lemmas, homotopy groups}
\begin{document}

\maketitle

\begin{abstract}
\small
We study the relationship between metric thickenings and simplicial complexes associated to coverings of metric spaces.
Let $\cU$ be a cover of a separable metric space $X$ by open sets with a uniform diameter bound.
The Vietoris complex~$\cV(\cU)$ contains all simplices with vertex set contained in some $U \in \cU$, and the Vietoris metric thickening~$\cVm(\cU)$ is the space of probability measures with support in some $U \in \cU$, equipped with an optimal transport metric.
We show that~$\cVm(\cU)$ and~$\cV(\cU)$ have isomorphic homotopy groups in all dimensions.
In particular, by choosing the cover $\cU$ appropriately, we get isomorphisms between the homotopy groups of Vietoris--Rips metric thickenings and simplicial complexes $\pi_k(\vrm{X}{r})\cong \pi_k(\vr{X}{r})$ for all integers $k\ge 0$, where both spaces are defined using the convention ``diameter $< r$'' (instead of $\le r$).
Similarly, we get isomorphisms between the homotopy groups of \v{C}ech metric thickenings and simplicial complexes $\pi_k(\cechm{X}{r})\cong \pi_k(\cech{X}{r})$ for all integers $k\ge 0$, where both spaces are defined using open balls (instead of closed balls).
\end{abstract}

%\setcounter{tocdepth}{1}
%\tableofcontents

\section{Introduction}

Given only a partial sampling $X$ from an unknown metric space $M$, how can one recover properties of the entire metric space $M$?
Questions of this kind frequently arise in topological data analysis, where one would like to understand the ``shape'' of a dataset~$X$, which is sometimes defined using the shape of the larger underlying metric space $M$ from which the data set $X$ was sampled~\cite{Carlsson2009}.

Vietoris--Rips and \v{C}ech simplicial complexes are useful tools for questions along these lines.
The idea is to ``thicken'' the dataset $X$, obtaining a Vietoris--Rips simplicial complex or a \v{C}ech simplicial complex, each of which have $X$ as their vertex set.
These constructions depend on the choice of a real-valued scale parameter $r\ge 0$ that quantifies the degree to which $X$ has been thickened: more and more simplices are added as $r$ increases.
Indeed, the \v{C}ech complex $\cech{X}{r}$ is the nerve of the open balls of radius $r$ centered at points in $X$, and so when the nerve theorem applies, the \v{C}ech complex is homotopy equivalent to the union of these balls.
The Vietoris--Rips complex instead contains as its simplices all finite subsets of $X$ of diameter less than $r$.
The Vietoris--Rips complex is closely related to the \v{C}ech complex, but is easier to compute since as a \emph{clique} or \emph{flag} complex it is completely determined by its underlying 1-skeleton graph.

Let us explain how Vietoris--Rips and \v{C}ech complexes of a partial sampling $X$ help recover properties of the entire metric space~$M$.
When $M$ is a sufficiently nice manifold, when $X$ is sufficiently close to~$M$, and when the scale parameter $r$ is chosen carefully, results by Latschev~\cite{Latschev2001}, Hausmann~\cite{Hausmann1995,virk2021rips,virk2021counter}, and Niyogi, Smale, and Weinberger~\cite{niyogi2008finding} say that the Vietoris--Rips simplicial complex $\vr{X}{r}$ and \v{C}ech simplicial complex $\cech{X}{r}$ have the same homotopy type as $M$.
Though these results apply in slightly different settings, they all share related assumptions that the scale parameter $r$ needs to be small compared to the curvature of the manifold~$M$.
Unfortunately, since $M$ is unknown, so is its curvature!
One is therefore left without clear guidance as to how to choose the scale parameter~$r$.
Fortunately, persistent homology~\cite{edelsbrunner2000topological,EdelsbrunnerHarer,zomorodian2005computing} enables some tools for avoiding this choice of scale $r$.
Indeed, the idea of persistent homology is to allow the scale $r$ to vary from small to large, and to track the topological changes of $\vr{X}{r}$ or $\cech{X}{r}$ as $r$ increases.
An important result is the stability of persistent homology~\cite{cohen2007stability,chazal2009gromov,ChazalDeSilvaOudot2014}, which in this context implies that if $X$ is close to $M$, then the persistent homology of $X$ is close to the persistent homology of $M$.
Furthermore, as more and more data points are sampled from a manifold $M$, then the persistent homology of the growing dataset converges to the persistent homology of the manifold $M$.

The question thus naturally arises: what is the persistent homology of Vietoris--Rips and \v{C}ech complexes of manifolds, as the scale parameter $r$ increases?
The Vietoris--Rips complexes of the circle obtain the homotopy types of the circle, the 3-sphere, the 5-sphere, the 7-sphere, etc., as the scale $r$ increases, until finally the complex is contractible~\cite{AA-VRS1,Adamaszek2013,AAFPP-J}.
These odd-sphere homotopy types have consequences for the persistent homology of spaces containing geodesic loops~\cite{virk2021footprints}.
Much less is known about the persistent homology of Vietoris--Rips complexes of $n$-spheres~\cite{AAF,ABF2,lim2020vietoris,zaremsky2019}.
The $1$-dimensional persistent homology of Vietoris--Rips and \v{C}ech complexes of geodesic spaces is completely classified by~\cite{virk2017approximations,virk20201,gasparovic2018complete}.
It is possible to put bounds on the length of higher-dimensional persistent homology bars by relating Vietoris--Rips complexes to the spread of a metric space and Gromov's filling radius~\cite{lim2020vietoris}, or instead to notions from geometric topology~\cite{adams2021geometric}.
However, very little is known in general about the persistent homology of Vietoris--Rips complexes $\vr{M}{r}$ or \v{C}ech complexes $\cech{M}{r}$ of a manifold $M$.

One reason why so little is known is that the topology of a Vietoris--Rips or \v{C}ech simplicial complex is at times difficult to work with.
For example, if $M$ is a manifold of dimension at least one, then the inclusion map $M\hookrightarrow \vr{M}{r}$ or $M\hookrightarrow \cech{M}{r}$ is not continuous since the vertex set of any simplicial complex is equipped with the discrete topology.
This situation is improved by considering instead the \emph{metric thickenings} $\vrm{M}{r}$ and $\cechm{M}{r}$~\cite{AAF}, which are in natural bijection with the geometric realizations of $\vr{M}{r}$ and $\cech{M}{r}$, but which are equipped with a more natural topology (indeed a metric) arising from ideas in optimal transport.
The inclusion maps $M\hookrightarrow \vrm{M}{r}$ or $M\hookrightarrow \cechm{M}{r}$ are now continuous, and in fact isometric embeddings onto their images.
It has recently been proven in~\cite{AMMW,MoyMasters} that the Vietoris--Rips and \v{C}ech metric thickenings have the same persistent homology barcodes as the corresponding simplicial complexes,\footnote{So long as one ignores whether an endpoint of a bar is open or closed.} enabling one to use either simplicial or metric techniques.
At times, the metric thickening allows one to go further; for example we can describe the first new homotopy type that appears in Vietoris--Rips metric thickenings of $n$-spheres for all $n$, even though the first new homotopy type for Vietoris--Rips simplicial complexes of $n$-spheres is only known for $n\le 2$~\cite{AA-VRS1,lim2020vietoris,katz1989diameter,katz1991neighborhoods}.
An additional motivating reason to better understand the relation between Vietoris--Rips simplicial complexes and metric thickenings is as follows.
In~\cite{lim2020vietoris}, the authors prove that the Vietoris--Rips filtration $\vr{X}{-}$ is isomorphic to the tubular neighborhoods thickening of $X$ inside an ambient injective metric space.
Hence, a better understanding of the relationship between $\vrm{X}{r}$ and $\vr{X}{r}$ may point to a deeper connection between Wasserstein spaces and injective (or hyperconvex) metric spaces.

In this paper we will define the Vietoris--Rips complex $\vr{X}{r}$ and metric thickening $\vrm{X}{r}$ using the inequality $<r$, though there are analogous versions $\vrleq{X}{r}$ and $\vrmleq{X}{r}$ defined using the inequality $\le r$.
It is known that the homotopy types of $\vrleq{X}{r}$ and $\vrmleq{X}{r}$ need not be the same.\footnote{
For example, when $r=0$, then $\vrleq{X}{0}$ is $X$ equipped with the discrete topology, whereas $\vrmleq{X}{0}$ is the metric space $X$ equipped with its standard topology.
A less trivial example is that if $S^1$ is the geodesic circle of circumference $2\pi$, then $\vrleq{S^1}{\frac{2\pi}{3}}\simeq \bigvee^\infty S^2$ is an uncountably infinite wedge sum of 2-dimensional spheres~\cite{AA-VRS1}, whereas $\vrmleq{S^1}{\frac{2\pi}{3}}\simeq S^3$ obtains the expected homotopy type of a 3-sphere~\cite{AAF,ABF}.
We say ``expected'' since we do have $\vrleq{S^1}{r}\simeq S^3$ for all $\frac{2\pi}{3}<r<\frac{4\pi}{5}$.
This entire footnote has analogues for \v{C}ech complexes and thickenings, as well.}
Nevertheless, it is reasonable to conjecture that if we return to using the inequality $<r$, then we have homotopy equivalences $\vr{X}{r}\simeq\vrm{X}{r}$ and $\cech{X}{r}\simeq\cechm{X}{r}$ for all separable metric spaces $X$ and scales $r\ge0$; see~\cite[Remark~3.3]{AAF}
%~\cite[Question~(1) in Section~9]{AMMW}
and Question~\ref{ques:homotopy-equiv} in this paper.
This conjecture is known to be true if $X$ is a discrete metric space~\cite[Proposition~6.6]{AAF},
or if $X$ is a compact manifold and the scale $r$ is sufficiently small compared to the manifold (in which case $\vr{X}{r}$, $\vrm{X}{r}$, $\cech{X}{r}$, and $\cechm{X}{r}$ are each homotopy equivalent to the manifold~\cite{Hausmann1995,AAF,AM}), or if $X=S^1$ is the circle and $r$ is arbitrary by recent work of Moy~\cite{moyVRmS1}.
The portion of the conjecture that remains open is when $X$ is not discrete and the scale $r$ is arbitrary, and for example the conjecture is open when $X$ is any manifold of dimension $\ge2$.
In this paper we provide positive results in the direction of this conjecture.
In our main result (Theorem~\ref{thm:main}), we prove that if $X$ is a separable metric space and if $r\ge0$, then $\vrm{X}{r}$ and $\vr{X}{r}$ have isomorphic homotopy groups, and similarly $\cechm{X}{r}$ and $\cech{X}{r}$ have isomorphic homotopy groups.
In other words, we prove that $\pi_k(\vrm{X}{r}) \cong \pi_k(\vr{X}{r})$ and $\pi_k(\cechm{X}{r}) \cong \pi_k(\cech{X}{r})$ for all integers $k\ge 0$.
Our main result is in fact more general:

\begin{theorem-mainIntro}
If $\cU$ is a uniformly-bounded open cover of a separable metric space $X$, then the Vietoris thickening $\cVm(\cU)$ and the Vietoris complex $\cV(\cU)$ have isomorphic homotopy groups $\pi_k(\cVm(\cU))\cong \pi_k(\cV(\cU))$ in all dimensions $k\ge 0$.
\end{theorem-mainIntro}

Here, the \emph{Vietoris simplicial complex} $\cV(\cU)$ has $X$ as its vertex set, and a finite subset $\sigma\subseteq X$ as a simplex when there exists some $U\in\cU$ with $\sigma\subseteq U$.
The \emph{Vietoris metric thickening} $\cVm(\cU)$ of all finitely-supported probability measures supported in some $U\in\cU$ is equipped with a different topology, which furthermore is induced by an optimal transport metric extending the metric on $X$; see Section~\ref{sec:prelim}.

By choosing $\cU$ to be the open cover of $X$ by all sets of diameter $<r$, we obtain the previously mentioned isomorphisms between the homotopy groups of $\cVm(\cU)=\vrm{X}{r}$ and $\cV(\cU)=\vr{X}{r}$.
Similarly, by choosing $\cU$ to be the open cover of $X$ by all balls of radius $<r$, we obtain the previously mentioned isomorphisms between the homotopy groups of $\cVm(\cU)=\cechm{X}{r}$ and $\cV(\cU)=\cech{X}{r}$.

The organization of our paper is as follows.
In Section~\ref{sec:prelim} we describe necessary preliminaries and we set notation.
In Section~\ref{sec:cover-good} we describe a cover of $\cVm(\cU)$ that is good but not open---one can think of the fact that this cover is not open as an obstacle towards attempted proofs of the (still-open) conjecture that $\cVm(\cU)$ and $\cV(\cU)$ have the same homotopy type.
In Section~\ref{sec:cover-open} we modify this good cover that is not open in order to obtain an open cover that is only ``good up to level $n$.''
This open cover is what we need in Section~\ref{sec:main} to prove Theorem~\ref{thm:main} that $\cVm(\cU)$ and $\cV(\cU)$ have isomorphic homotopy groups.
Indeed, the main tool we use in our proof is different versions of the nerve theorem, including versions by Nagorko~\cite{nagorko2007carrier} and the third author~\cite{virk2021rips}.
The proof of Theorem~\ref{thm:main} relies on the fact that $\cVm(\cU)$ is locally contractible, which we prove in Theorem~\ref{thm:locally-contractible} in Section~\ref{sec:locally-contractible}.
We conclude and pose open questions in Section~\ref{sec:conclusion}.

\section{Preliminaries}
\label{sec:prelim}

Let us define the main objects of study: the nerve and Vietoris complexes of an open cover, Vietoris--Rips and \v{C}ech simplicial complexes, and metric thickenings.

\subsection*{Topological spaces and metric spaces}

Let $X=(X,d)$ be a metric space.
For $x\in X$ and $r\ge 0$, we let $B_X(x;r):=\{x'\in X~|~d(x,x')<r\}$ be the open ball of radius $r$ about $x$ in $X$.
This ball is empty if $r=0$.
The \emph{diameter} of a subset $\sigma\subseteq X$ is defined as $\diam(\sigma)=\sup_{x,x'\in \sigma}d(x,x')$.
For two nonempty subsets $A,B\subseteq X$, we define $d(A,B):=\inf_{a\in A,b\in B} d(a,b)$ to be the distance between the two sets.

If $(X,d_X)$ and $(Y,d_Y)$ are two metric spaces, if $f\colon X\to Y$ is a map between them, and if $L$ is a nonnegative real number, then we say that $f$ is \emph{$L$-Lipschitz} if $d_Y(f(x),f(x'))\le L\,d_X(x,x')$ for all $x,x'\in X$.

Let $X$ be a metric space.
For $U\subseteq X$, let $U^C=X\setminus U$ denote the complement of $U$ in $X$.
If $U$ is open in $X$, then there exists a 1-Lipschitz map $\phi\colon X \to [0,1]$ with $\phi^{-1}(0)=U^C$.
For example, if $U^C\neq\emptyset$ then one can let $\phi(x)=\min(d(x,U^C),1)$, and if $U^C = \emptyset$ then one can simply let $\phi$ be the constant function~$1$.

For $Y$ a topological space, we let $\pi_k(Y)$ denote its $k$-th homotopy group.
For two topological spaces $Y$ and $Y'$, we write ``$\pi_k(Y)\cong\pi_k(Y')$ for all integers $k\ge 0$'' to mean that the spaces $Y$ and $Y'$ have the same number $m$ of path-connected components, and that there exist points $y_1,\ldots,y_m$ and $y'_1,\ldots,y'_m$ from distinct components in $Y$ and $Y'$ along with a bijection $h\colon\{y_1,\ldots,y_m\}\to\{y'_1,\ldots,y'_m\}$ such that $\pi_k(Y,y_i)\cong\pi_k(Y',h(y_i))$ for all integers $k\ge 0$ and for all $1\le i\le m$.
We say that a space $Y$ is \emph{$n$-connected} if $\pi_k(Y)$ is the trivial group for all $0\le k\le n$.

\subsection*{Nerve and Vietoris complex of a cover}

Let $\cU$ be a cover of the metric space $X$ by nonempty sets.
Therefore, each point $x\in X$ satisfies $x\in U$ for at least one set $U\in \cU$.
We may write $\cU=\{U_\alpha\}_{\alpha\in \cI}$, where the index set $\cI$ could be finite, countably infinite, or uncountably infinite.
We allow the possibility that $U_\alpha=U_{\alpha'}$ for $\alpha\neq \alpha'$, but we require $U_\alpha\neq\emptyset$ for all $\alpha\in\cI$.
We say that the cover $\cU$ is \emph{open} if each $U\in\cU$ is an open set in $X$.
We say that the cover $\cU$ is \emph{uniformly bounded} if there exists some constant $D<\infty$ such that $\diam(U)<D$ for each $U\in\cU$.

For example, if $\cU$ is the open cover of $X$ consisting of all open sets of diameter less than $r$, then $\cU$ is $r$-bounded.
Similarly, if $\cU$ is the cover of $X$ by open balls of radius $r$, then $\cU$ is $2r$-bounded.

The \emph{nerve simplicial complex} $\cN(\cU)$ has $\cI$ as its vertex set, and has a finite subset $\sigma\subseteq\cI$ as a simplex if $\cap_{\alpha\in \sigma}U_\alpha\neq\emptyset$~\cite{Borsuk1948}.
The \emph{Vietoris simplicial complex} $\cV(\cU)$ has $X$ as its vertex set, and has a finite subset $\sigma\subseteq X$ as a simplex if there exists some $U\in\cU$ with $\sigma\subseteq U$.
By Dowker duality~\cite{dowker1952topology,zeeman1962dihomology,virk2021rips}, the complexes $\cN(\cU)$ and $\cV(\cU)$ are homotopy equivalent.
See Figure~\ref{fig:nerve-Vietoris}.

\begin{figure}[h]
\centering
\includegraphics[width=4in]{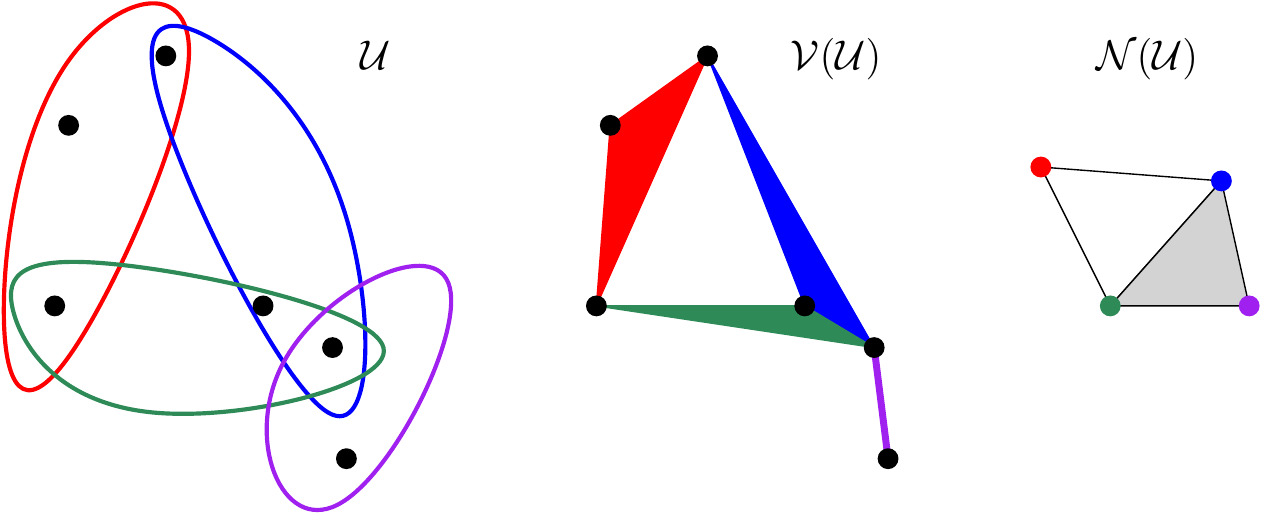}
\caption{This figure is taken from~\cite[Figure~5.15]{VirkBook}.
(Left) A cover $\cU$ of six points by
four colored sets, (center) its Vietoris
complex $\cV(\cU)$, and (right) its nerve complex $\cN(\cU)$.}
\label{fig:nerve-Vietoris}
\end{figure}

For $\sigma\subseteq \cI$, we let $U_\sigma:=\cap_{\alpha\in \sigma} U_\alpha$ denote the intersection of the sets from $\sigma$.
If $U_\alpha$ is contractible for each $\alpha\in\cI$, and if $U_\sigma$ is either contractible or empty for each $\sigma\subseteq \cI$, then we say that the cover $\cU$ is a \emph{good cover}.
The nerve theorem provides relatively mild point-set topology assumptions\footnote{To see that point-set topology assumptions are needed, consider a cover of a connected space $X$ by two disjoint sets.}
on $X$ and on $\cU$ so that if $\cU$ is a good cover of $X$, then the nerve $\cN(\cU)$ is homotopy equivalent to the space $X$.
This theorem applies, for example, if $\cU$ is an open cover of a paracompact space $X$, or if $\cU$ is a cover of a simplicial complex by subcomplexes~\cite{Borsuk1948,Dieck,weil1952theoremes,Hatcher}.

If $U_\alpha$ is contractible for each $\alpha\in\cI$, and if $U_\sigma$ is either contractible or empty for each $\sigma\subseteq \cI$ of size $|\sigma|\le n$, then we say that the cover $\cU$ is a \emph{good cover up to level $n$}.
There are generalized versions of the nerve lemma which, when $\cU$ is only a good cover up to a certain level, can still imply that the nerve $\cN(\cU)$ and $X$ have matching homotopy groups up to a certain dimension~\cite{bjorner2003nerves}.
We will use such a result by Nagorko~\cite{nagorko2007carrier}.

\subsection*{Vietoris--Rips and \v{C}ech simplicial complexes}

For $X$ a metric space and for $r>0$, the \emph{Vietoris--Rips simplicial complex} $\vr{X}{r}$ contains $X$ as its vertex set, and a finite subset $\sigma \subseteq X$ as a simplex if $\diam(\sigma)< r$.
If $\cU$ is chosen to be the open cover of $X$ consisting of all open sets of diameter less than $r$, then the Vietoris--Rips complex is the Vietoris complex of this cover, namely $\vr{X}{r}=\cV(\cU)$~\cite{virk2021rips}.
By convention, $\vr{X}{0}$ is the empty simplicial complex (with no vertices).

For $x\in X$ and $r\ge 0$, we let $B(x,r):=\{x'\in X~|~d(x,x')<r\}$ denote the open ball of radius $r$ centered at the point $x$.
For $r>0$, the \emph{\v{C}ech simplicial complex} $\cech{X}{r}$ contains $X$ as its vertex set, and a finite subset $\sigma \subseteq X$ as a simplex if $\cap_{x\in \sigma}B(x,r)\neq\emptyset$.
Equivalently, if $\cU=\{B(x,r)\}_{x\in X}$ is defined to be the cover of $X$ by open balls of radius $r$, then the \v{C}ech complex is equal to the nerve complex of this cover, namely $\cech{X}{r}=\cN(\cU)$.
Interestingly, one can also see that the \v{C}ech complex is the Vietoris complex of this same cover, namely $\cech{X}{r}=\cV(\cU)$.
This is true since balls of radius $r$ in $X$ intersect at a common point if and only if their centers are all contained in a ball of radius $r$ about the intersection point.
Note the equality $\cN(\cU)=\cV(\cU)$ here is possible only since the vertex sets of these two simplicial complexes agree, which is the case since $\cU=\{B(x,r)\}_{x\in X}$ is a cover of $X$ whose sets are also indexed by the points in $X$.

We have seen that one can realize both Vietoris--Rips complexes and \v{C}ech complexes, two of the most popular simplical complexes in applied topology, as Vietoris complexes of different covers.
Therefore, Vietoris complexes will be the main simplicial complexes of interest in the rest of this paper.

\subsection*{Optimal transport}

For $X$ a metric space, let $\cPfin{X}$ be the set of all probability measures on $X$ with finite support.
In other words, each measure $\mu\in\cPfin{X}$ can be written as $\mu=\sum_{i=0}^k a_i\delta_{x_i}$ with $k\ge 0$, $a_i>0$, $\sum a_i=1$, and $x_i\in X$ for all $i$.
We define the \emph{support} of this finitely supported measure to be $\supp(\mu)=\{x_0,\ldots,x_k\}$.
Here $\delta_{x}$ is the Dirac probability measure with unit mass at the point $x\in X$.
A \emph{coupling} between two measures $\mu,\nu\in\cPfin{X}$ is a probability measure $\gamma \in \cPfin{X\times X}$ whose marginals on the first and second factors are $\mu$ and $\nu$.
Note that $\gamma$ has finite support since $\mu$ and $\nu$ do.
Let $\cpl(\mu,\nu)$ denote the set of all couplings between these two measures.
Then, for a real number $q\ge 1$, the \emph{$q$-Wasserstein distance} between $\mu$ and $\nu$ is
\[d_{W,q}(\mu,\nu):=\inf_{\gamma\in\cpl(\mu,\nu)}\left(\int_{X\times X}d(x,y)^q\gamma(dx \times dy)\right)^{1/q}.\]
It is easy to see that this infimium is realized since $\gamma$ has finite support.
In particular, we have $d_{W,q}(\delta_x,\delta_y)=d(x,y)$ for any $x,y\in X$.
The Wasserstein metric on the space of probability Radon measures has many names: the Kantorovich, optimal transport, or earth mover's metric~\cite{vershik2013long,villani2003topics,villani2008optimal}.
It is known that in a variety of different contexts, the $q$-Wasserstein metric induces the same (weak) topology for any $q \in [1, \infty)$; see~\cite{bogachev2018weak} and~\cite[Appendix~A]{AMMW}.

We will frequently use the following lemma in order to construct continuous homotopies.

\begin{lemma}
\label{lem:homotopy1}
Suppose $A \subseteq \cPfin{X}$.
If $f \colon A \to \cPfin{X}$ is continuous, then so is the homotopy
$H \colon A \times [0, 1] \to \cPfin{X}$ given by $H(\mu, t) = (1 - t)\mu + t f(\mu)$.
\end{lemma}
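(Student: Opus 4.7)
The plan is to decompose the quantity $d_{W,q}(H(\mu,t), H(\mu',t'))$ via the triangle inequality into two pieces, one controlling the variation in the measure argument and the other controlling the variation in the time parameter, and to bound each piece separately. Concretely, I would fix $(\mu,t)\in A\times[0,1]$ and write
\[d_{W,q}(H(\mu,t),H(\mu',t'))\;\le\;d_{W,q}(H(\mu,t'),H(\mu',t'))\;+\;d_{W,q}(H(\mu,t),H(\mu,t'))\]
for $(\mu',t')$ near $(\mu,t)$, and check that each summand goes to $0$.

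For the first summand I would invoke the joint convexity of the Wasserstein distance: if $\gamma_1\in\cpl(\mu,\mu')$ and $\gamma_2\in\cpl(f(\mu),f(\mu'))$ are optimal couplings, then $(1-t')\gamma_1+t'\gamma_2\in\cpl(H(\mu,t'),H(\mu',t'))$, which gives
\[d_{W,q}(H(\mu,t'),H(\mu',t'))^q\;\le\;(1-t')\,d_{W,q}(\mu,\mu')^q\,+\,t'\,d_{W,q}(f(\mu),f(\mu'))^q.\]
The first term on the right vanishes as $\mu'\to\mu$ by definition, and the second vanishes by the continuity hypothesis on $f$.

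For the second summand I would exhibit an explicit coupling. Writing $\mu=\sum_i a_i\delta_{x_i}$ and $f(\mu)=\sum_j b_j\delta_{y_j}$ (collecting common atoms if necessary), the measures $H(\mu,t)$ and $H(\mu,t')$ agree on a base part and differ by a transfer of total mass $|t-t'|$ between the two finite supports. Taking (for $t\le t'$) the coupling that transports $(t'-t)\,a_i b_j$ from $x_i$ to $y_j$ and keeps all remaining mass in place gives
\[d_{W,q}(H(\mu,t),H(\mu,t'))^q\;\le\;|t'-t|\,\sum_{i,j}a_i b_j\,d(x_i,y_j)^q\;\le\;|t'-t|\cdot D_\mu^{\,q},\]
where $D_\mu:=\diam(\supp(\mu)\cup\supp(f(\mu)))<\infty$ since both supports are finite. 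This is a constant depending on $\mu$, which is fine because $\mu$ is being held fixed while $t'\to t$.

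The only mild subtlety is bookkeeping: some $x_i$ may coincide with some $y_j$, but any such coincidence can only decrease the transport cost, so the above inequality still holds. Combining the two bounds gives $d_{W,q}(H(\mu,t),H(\mu',t'))\to 0$ as $(\mu',t')\to(\mu,t)$, proving continuity of $H$. I do not anticipate a genuine obstacle; the crux is simply identifying joint convexity of $d_{W,q}$ as the right tool for the measure-variation piece, and everything else is a direct coupling estimate.
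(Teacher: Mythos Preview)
Your proof is correct. The paper does not actually prove this lemma; it simply cites \cite[Proposition~2.4]{AMMW} and \cite[Lemma~3.9]{AAF}. Your argument---splitting via the triangle inequality, handling the measure variation by the joint convexity inequality $d_{W,q}^q\bigl((1-s)\mu_1+s\nu_1,(1-s)\mu_2+s\nu_2\bigr)\le (1-s)\,d_{W,q}^q(\mu_1,\mu_2)+s\,d_{W,q}^q(\nu_1,\nu_2)$, and handling the time variation by an explicit coupling that keeps the common mass $(1-t')\mu+t\,f(\mu)$ in place---is exactly the standard proof that sits behind those citations. In effect you have supplied the details the paper outsources, so there is nothing to compare beyond noting that your write-up is self-contained while the paper's is not.
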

This follows, for example, from~\cite[Proposition~2.4]{AMMW} or~\cite[Lemma~3.9]{AAF}; see also~\cite{bogachev2018weak}.

For convenience, for the remainder of this paper we will use the $1$-Wasserstein metric, which we denote simply by $d_W$.
We will also use finitely-supported measures.
If $\mu=\sum_{i=0}^k a_i\delta_{x_i}$ and $\nu=\sum_{j=0}^l b_j\delta_{y_j}$
are two finitely-supported probability measures, then a coupling between them is a collection of nonnegative real numbers $\{c_{i,j}\}$ for $1\le i\le k$ and $1\le j\le l$ such that $\sum_j c_{i,j}=a_i$ for all $i$, and such that $\sum_i c_{i,j}=b_j$ for all $j$; it follows that $\sum_{i,j}c_{i,j}=1$.
The cost of such a coupling is $\sum_{i,j}c_{i,j}d_X(x_i,y_j)$, and the 1-Wasserstein distance is the infimal cost over all possible couplings.
In our setting of finitely-supported measures, an infimal coupling is always attained.

\subsection*{Partial couplings}

We will need the following lemma related to partial couplings.
A \emph{partial coupling} between two measures $\mu = \sum_i a_i \delta_{x_i}$ and $\nu= \sum_j b_j \delta_{y_j}$ is collection $\{c_{i,j}\}$ of nonnegative real numbers such that $\sum_j c_{i,j} \leq a_i$ and $\sum_i c_{i,j} \leq b_j$.
It gives incomplete information about how the mass is transported from $\mu$ to $\nu$, but any partial coupling from $\mu$ to $\nu$ can be completed to a coupling from $\mu$ to $\nu$.
That is, given a partial coupling $\{c_{i,j}\}$, there exists a coupling $\{\tc_{i,j}\}$ such that $c_{i,j} \leq \tc_{i,j}$ for all $i,j$.
Indeed, we can simply extend $\{c_{i,j}\}$ by using the product measure on any unmatched mass.
To be explicit, we define
\[\tc_{i,j}:=c_{i,j}+\frac{(a_i-\sum_j c_{i,j})(b_j-\sum_i c_{i,j})}{1-c},\quad\text{where}\quad c:=\sum_{i,j}c_{i,j}.\]
Indeed, we then have
\begin{align*}
\sum_j\tc_{i,j} &= \sum_j \left(c_{i,j}+\frac{(a_i-\sum_j c_{i,j})(b_j-\sum_i c_{i,j})}{1-c} \right) \\
&= \sum_jc_{i,j} + \frac{a_i-\sum_j c_{i,j}}{1-c} \sum_j \left(b_j-\sum_i c_{i,j}\right) \\
&= \sum_jc_{i,j} + \frac{a_i-\sum_j c_{i,j}}{1-c}\left(1-c\right) \\
&= a_i.
\end{align*}
A similar computation shows $\sum_i \tc_{i,j}=b_j$, and therefore $\{c_{i,j}\}$ is a coupling between $\mu$ and $\nu$.

The following lemma will allow us to bound the Wasserstein distance between two measures, even if we only construct a partial coupling between them.

\begin{lemma}[Partial coupling lemma]
\label{lem:partial-transport}
If $\{c_{i,j}\}$ is a partial coupling between $\mu=\sum_i a_i\delta_{x_i}$ and $\nu=\sum_j b_j\delta_{y_j}$, then
\[d_W(\mu,\nu) \le \sum_{i,j}c_{i,j}d_X(x_i,y_j) + \left(1-\sum_{i,j}c_{i,j}\right)\diam\left(\supp(\mu)\cup\supp(\nu)\right).\]
\end{lemma}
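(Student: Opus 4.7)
The plan is to use the explicit extension formula from the paragraph preceding the lemma statement to complete the partial coupling $\{c_{i,j}\}$ to a full coupling $\{\tc_{i,j}\}$ of $\mu$ and $\nu$, and then to estimate its Wasserstein cost by separating the ``already matched'' mass from the ``newly added'' mass. First I handle the easy edge case $c = \sum_{i,j} c_{i,j} = 1$: in that case $\{c_{i,j}\}$ is already a coupling, so $d_W(\mu,\nu) \le \sum_{i,j} c_{i,j}\, d_X(x_i,y_j)$, and the second term on the right-hand side vanishes.

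Assume from now on that $c < 1$, and set
\[\tc_{i,j} := c_{i,j} + \frac{(a_i - \sum_j c_{i,j})(b_j - \sum_i c_{i,j})}{1 - c},\]
which the computation just before the lemma shows is a genuine coupling between $\mu$ and $\nu$. Hence $d_W(\mu,\nu) \le \sum_{i,j} \tc_{i,j}\, d_X(x_i,y_j)$, and splitting $\tc_{i,j}$ into its two summands gives
\[d_W(\mu,\nu) \le \sum_{i,j} c_{i,j}\, d_X(x_i,y_j) + \sum_{i,j} \frac{(a_i - \sum_j c_{i,j})(b_j - \sum_i c_{i,j})}{1-c}\, d_X(x_i,y_j).\]

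For the second sum I bound each distance $d_X(x_i,y_j)$ by $D := \diam(\supp(\mu) \cup \supp(\nu))$, which is legitimate because any $(i,j)$ contributing a nonzero coefficient has $a_i > 0$ and $b_j > 0$, hence $x_i \in \supp(\mu)$ and $y_j \in \supp(\nu)$. The remaining scalar factor factors as a product of marginal sums:
\[\sum_{i,j} \frac{(a_i - \sum_j c_{i,j})(b_j - \sum_i c_{i,j})}{1-c} = \frac{1}{1-c}\Bigl(\sum_i (a_i - \textstyle\sum_j c_{i,j})\Bigr)\Bigl(\sum_j (b_j - \textstyle\sum_i c_{i,j})\Bigr) = \frac{(1-c)^2}{1-c} = 1-c,\]
since $\sum_i a_i = \sum_j b_j = 1$ and $\sum_{i,j} c_{i,j} = c$. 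Combining these two observations yields the claimed bound $d_W(\mu,\nu) \le \sum_{i,j} c_{i,j}\, d_X(x_i,y_j) + (1-c)D$.

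There is no real obstacle here; the conceptual work was done when defining $\tc_{i,j}$, and the proof is just a bookkeeping computation that separates the already-coupled contribution from the ``worst-case'' bound on the residual $1-c$ units of mass, for which we have no control beyond the fact that they are transported within $\supp(\mu) \cup \supp(\nu)$.
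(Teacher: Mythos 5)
Your proof is correct and follows essentially the same route as the paper: complete the partial coupling by putting the product measure on the unmatched mass, split the transport cost into the already-coupled part and the residual part, and bound the residual $1-c$ units of mass by the diameter of $\supp(\mu)\cup\supp(\nu)$. The only differences are cosmetic---you compute the residual mass explicitly from the extension formula (and treat the $c=1$ case separately), where the paper simply uses that $\tc_{i,j}-c_{i,j}\ge 0$ sums to $1-\sum_{i,j}c_{i,j}$.
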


\begin{proof}
Choose any coupling $\{\tc_{i,j}\}$ between $\mu$ and $\nu$ with $c_{i,j} \leq \tc_{i,j}$; this is possible (for example) by extending $\{c_{i,j}\}$ using the product measure on any unmatched mass.
We have
\begin{align*}
d_W(\mu,\nu) &\le \sum_{i,j}\tc_{i,j}d_X(x_i,y_j) \\
&= \sum_{i,j}c_{i,j}d_X(x_i,y_j) + \sum_{i,j}(\tc_{i,j}-c_{i,j})d_X(x_i,y_j) \\
&\le \sum_{i,j}c_{i,j}d_X(x_i,y_j) + \left(1-\sum_{i,j}c_{i,j}\right)\diam\left(\supp(\mu)\cup\supp(\nu)\right).
\end{align*}
\end{proof}

\subsection*{Metric thickenings}

Let $X$ be a metric space, and let $K$ be a simplicial complex with vertex set $X$.
The \emph{metric thickening} $\Km$ is defined in~\cite{AAF} as the space of all probability measures that are supported on the vertex set of some simplex in~$K$, equipped with a Wasserstein metric.
More explicitly, $\Km$ is the metric space
\[\Km:=\left\{\mu\in\cPfin{X}~|~\supp(\mu)\in K\right\},\]
%equipped with the $q$-Wasserstein metric for some $1\le q< \infty$.
%Any such $q$ induces the same topology, and in this paper we use $q=1$ for convenience.
equipped with the $1$-Wasserstein metric.\footnote{As any $q$-Wasserstein metric for $1\le q< \infty$ induces the same topology, we make the choice $q=1$ for convenience.} 
We note that there is a natural bijection between the geometric realization of $K$ and $\Km$, obtained by assigning a point in the geometric realization of $K$ associated to the simplex $\{x_0,\ldots,x_k\}$ with barycentric coordinates $(a_0,\ldots,a_k)$ to the measure $\sum_{i=0}^k a_i\delta_{x_i}\in\Km$.
By~\cite[Proposition~6.1]{AAF} the map $K\to\Km$ given by this bijection is continuous, but the inverse map $\Km\to K$ may be discontinuous when $X$ is infinite.
When the simplicial complex $K$ is of the form $\cV(\cU)$, $\vr{X}{r}$, or $\cech{X}{r}$, then we denote $\Km$ by $\cVm(\cU)$, $\vrm{X}{r}$, or $\cechm{X}{r}$, respectively.

\section{A good cover that is not open}
\label{sec:cover-good}

We describe how to use a cover $\cU$ of a metric space $X$ (that is not necessarily a good cover) to build a good cover of the Vietoris metric thickening $\cVm(\cU)$.

For $Y\subseteq X$, let $M_Y\subseteq\cPfin{X}$ be the set of all finitely supported probability measures with support contained in $Y$.
If $Y$ is empty then so is $M_Y$.
If $Y$ is nonempty then $M_Y\simeq \ast$ is contractible.
Indeed, choose any $x\in Y$ and define the deformation retraction $H\colon M_Y\times [0,1] \to M_Y$, with $H(\cdot,0)$ the identity map on $M_Y$ and with $H(\cdot,1)$ the constant map to $\delta_x$, via $H(\sum a_i \delta_{x_i},t) = (1-t)\sum a_i \delta_{x_i} + t \delta_x$.

Let $\cU=\{U_\alpha\}_{\alpha\in\cI}$ be a cover of the metric space $X$.
Note that 
$M_\cU := \{M_U\}_{U \in \cU} = \{M_{U_\alpha}\}_{\alpha\in\cI}$
is a cover of the Vietoris metric thickening $\cVm(\cU)$.
For $\alpha\in \cI$ we have that $M_{U_\alpha}$ is contractible.
For $\sigma\subseteq \cI$ we have $\cap_{\alpha\in \sigma}M_{U_\alpha}=M_{U_\sigma}$.
Note that $M_{U_\sigma}$ is contractible if $U_\sigma\neq\emptyset$, i.e.\ if $\sigma\in\cN(\cU)$, and $M_{U_\sigma}$ is empty if $U_\sigma=\emptyset$, i.e.\ if $\sigma\notin\cN(\cU)$.
Therefore $M_\cU$ is a good cover of $\cVm(\cU)$, and furthermore $\cN(M_\cU)=\cN(\cU)$.
Applying the homotopy equivalence from~\cite[Proposition~3.7]{virk2021rips}, we get that $\cN(M_\cU)=\cN(\cU)\simeq \cV(\cU)$.
In summary, we have constructed a good cover of $\cVm(\cU)$ whose nerve is homotopy equivalent to the Vietoris complex $\cV(\cU)$.

Is there some version of the nerve theorem that we could apply to the good cover $M_\cU$?
\emph{If there were}, then we would be able to conclude that $\cVm(\cU)$ is homotopy equivalent to the nerve $\cN(M_\cU)$, which is homotopy equivalent $\cV(\cU)$.
Unfortunately, we don't know if the cover $M_\cU$ satisfies the point-set topology conditions needed to apply any version of the nerve theorem that we are aware of.

In particular, even if $\cU$ is an open cover of $X$, the cover $M_\cU$ of $\cVm(\cU)$ is not an open cover in general.\footnote{For example, if $x,y\in X$ and $U,U'\in\cU$ satisfy $x\in U\cap U'$ and $y\in U'\setminus U$, then $\delta_x\in M_U$.
Any open ball in $B_{\cVm(\cU)}(\delta_x;\varepsilon)$ contains points of the form $(1-\varepsilon')\delta_x + \varepsilon'\delta_y\notin M_U$ for $\varepsilon'>0$ sufficiently small.
This shows that $M_U$ is not open.}
One can think of the fact that this cover is not open as an obstacle towards attempted proofs of the (still-open) conjecture that $\cVm(\cU)$ and $\cV(\cU)$ have the same homotopy type; see Question~\ref{ques:homotopy-equiv}.

The goal of the remainder of the paper, in some sense, is to overcome the obstacle that the cover $M_\cU$ is not open.
We will construct a modified cover $\wM_\cU$ of $\cVm(\cU)$ that is open, while maintaining as many nice properties as possible.
In particular, though $\wM_\cU$ will not be a good cover in general, it will be good up to level $n$ for some fixed but arbitrary integer $n$.
This will be enough for us to prove that all of the homotopy groups of $\cVm(\cU)$ and $\cV(\cU)$ agree.

\section{An open cover, good up to level $n$}
\label{sec:cover-open}

Let $\cU$ be a uniformly bounded open cover of a metric space $X$. 
In this section we show how to produce a cover $\wM_\cU$ of $\cVm(\cU)$ that is open and good up to level $n$.
We begin with some preliminaries.

\begin{definition}
Let $X$ be a metric space, let $U\subseteq X$, and let $0<p<1$.
A subset $A \subseteq \cPfin{X}$ has the \emph{mass concentration property} $\MCP(p,U)$ if for each $\mu\in A$, more than $p$ of its mass is contained in $U$, i.e.\ $\mu(U)>p$.
\end{definition}

\begin{definition}
\label{def:pumping-convex}
Let $X$ be a metric space and $U\subseteq X$.
We say that $A \subseteq \cPfin{X}$ is \emph{$U$-pumping convex} if for any $\mu \in A$, any $\nu\in\cPfin{X}$ with $\supp(\nu)\subseteq \supp(\mu)\cap U$, and any $t\in [0,1]$, we have that $(1-t)\mu+t\nu\in A$.
\end{definition}

Note that an intersection of $U$-pumping convex sets is $U$-pumping convex, and a  union of $U$-pumping convex sets is $U$-pumping convex.

The following pumping lemma shows how to continuously deform a measure that has some of its mass in $U$ to instead have all of its mass in $U$; see Figure~\ref{fig:pumping}.

\begin{figure}[h]
\centering
\includegraphics[width=4in]{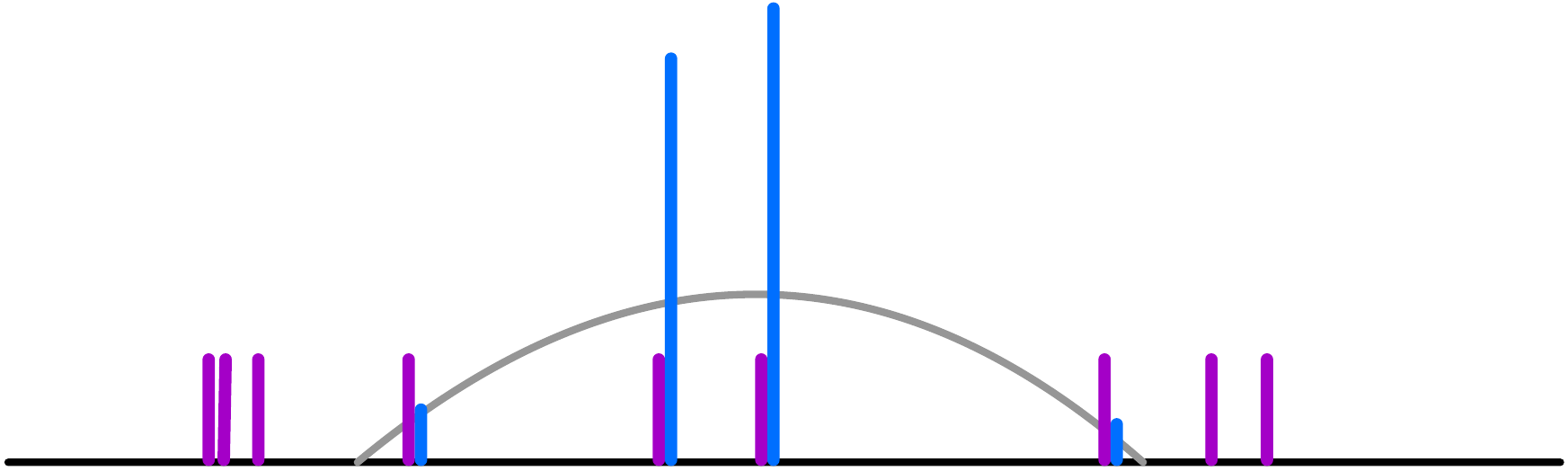}
\caption{A drawing of the pumping lemma when $X=\R$, with map $\phi$ in gray, measure $\mu$ in purple, and measure $f(\mu)$ in blue.}
\label{fig:pumping}
\end{figure}

\begin{lemma}[Pumping lemma]
\label{lem:homotopy2}
Let $X$ be a metric space, and let the open set $U\subseteq X$ have finite diameter.
Suppose $A \subseteq \cPfin{X}$ has $\MCP(p,U)$ for some $0<p<1$, and let $\phi\colon X \to [0,1]$ be an $L$-Lipschitz map with $\phi^{-1}(0)=U^C$.
Then $f\colon A \to M_U\subseteq\cPfin{X}$ defined as 
\[
f\Big(\sum_i a_i \delta_{x_i}\Big) = \sum_i \left(\frac{a_i \phi(x_i)}{\sum_j a_j \phi(x_j)}\right) \delta_{x_i}
\]
is continuous.
If $A$ is furthermore $U$-pumping convex, then the homotopy $H \colon A \times [0, 1] \to A$ defined by $H(\mu, t) = (1 - t)\mu + t f(\mu)$ is well-defined and continuous.
\end{lemma}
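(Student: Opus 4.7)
The plan is to first verify well-definedness of $f$, then prove continuity of $f$ via the partial coupling lemma, and finally deduce the statements about the homotopy $H$. For well-definedness, note that $Z_\mu := \sum_j a_j \phi(x_j) = \int \phi\,d\mu$ is strictly positive because the $\MCP(p,U)$ condition forces $\mu$ to put positive mass on $U = \phi^{-1}((0,1])$, where $\phi>0$ strictly. The support of $f(\mu)$ equals $\{x_i \in \supp(\mu) : \phi(x_i) > 0\} \subseteq U$, placing $f(\mu)$ in $M_U$.

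For continuity of $f$, given $\mu = \sum_i a_i \delta_{x_i}$ and $\nu = \sum_j b_j \delta_{y_j}$ in $A$ with an optimal coupling $\{c_{i,j}\}$ realizing $d_W(\mu,\nu)$, I propose the partial coupling
\[c'_{i,j} := c_{i,j}\cdot\frac{\min(\phi(x_i), \phi(y_j))}{\max(Z_\mu, Z_\nu)}\]
between $f(\mu)$ and $f(\nu)$. Its row and column sums are bounded by the atomic masses of $f(\mu)$ and $f(\nu)$, respectively, because $\min(\phi(x_i),\phi(y_j))/\max(Z_\mu, Z_\nu)$ is at most both $\phi(x_i)/Z_\mu$ and $\phi(y_j)/Z_\nu$. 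Applying Lemma~\ref{lem:partial-transport} then requires controlling two quantities. The transport-cost term is bounded by $d_W(\mu,\nu)/\max(Z_\mu, Z_\nu)$ since $\phi\le 1$. For the mass defect $1 - \sum c'_{i,j}$, rewriting $\min(\phi(x_i), \phi(y_j)) = \phi(x_i) - (\phi(x_i)-\phi(y_j))^+$ and using $|\phi(x_i)-\phi(y_j)|\le L\,d(x_i,y_j)$ summed against $c_{i,j}$ yields $1 - \sum c'_{i,j} \le L\,d_W(\mu, \nu)/\max(Z_\mu, Z_\nu)$. Since $\supp(f(\mu))\cup\supp(f(\nu))\subseteq U$ has finite diameter, Lemma~\ref{lem:partial-transport} then produces the local estimate
\[d_W(f(\mu), f(\nu))\le \frac{1 + L\diam(U)}{\max(Z_\mu, Z_\nu)}\,d_W(\mu, \nu).\]
The estimate $|Z_\mu - Z_\nu|\le L\,d_W(\mu, \nu)$ (obtained by summing $|\phi(x_i)-\phi(y_j)|\le L\,d(x_i,y_j)$ against the same optimal coupling) then ensures $\max(Z_\mu, Z_\nu)$ stays bounded away from $0$ in a neighborhood of any fixed $\mu$, yielding continuity of $f$ at $\mu$.

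For the homotopy, $\supp(f(\mu))\subseteq\supp(\mu)\cap U$, so $U$-pumping convexity of $A$ (Definition~\ref{def:pumping-convex}) immediately gives $(1-t)\mu + tf(\mu)\in A$ for all $t\in[0,1]$, showing $H$ is well-defined into $A$; continuity of $H$ then follows directly from Lemma~\ref{lem:homotopy1}. The main obstacle I expect is continuity of $f$: the normalization in $f$ uses different denominators $Z_\mu$ and $Z_\nu$, and the map $\mu\mapsto Z_\mu$ can be small whenever $\phi$ is small on $\supp(\mu)\cap U$, so naive couplings fail to control $d_W(f(\mu),f(\nu))$. The choice $\min(\phi(x_i),\phi(y_j))/\max(Z_\mu, Z_\nu)$ is engineered so that $\{c'_{i,j}\}$ is a valid partial coupling for \emph{both} $f(\mu)$ and $f(\nu)$ simultaneously, and the $L$-Lipschitz hypothesis on $\phi$ is precisely what converts the resulting mass defect into a quantity controlled by $d_W(\mu,\nu)$.
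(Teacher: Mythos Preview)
Your proposal is correct and follows essentially the same route as the paper: both construct a partial coupling between $f(\mu)$ and $f(\nu)$ by reweighting an optimal coupling for $(\mu,\nu)$ via $\phi$, then invoke Lemma~\ref{lem:partial-transport} together with the $L$-Lipschitz bound $|Z_\mu - Z_\nu|\le L\,d_W(\mu,\nu)$; the only cosmetic difference is that the paper uses the weight $\min(\phi(x_i)/Z_\mu,\phi(y_j)/Z_\nu)$ instead of your $\min(\phi(x_i),\phi(y_j))/\max(Z_\mu,Z_\nu)$. One small arithmetic slip: with your rewriting $\min(\phi(x_i),\phi(y_j))=\phi(x_i)-(\phi(x_i)-\phi(y_j))^+$ the mass-defect bound picks up an extra $\max(Z_\mu,Z_\nu)-Z_\mu\le L\,d_W(\mu,\nu)$ term, giving $2L\,d_W(\mu,\nu)/\max(Z_\mu,Z_\nu)$ rather than $L\,d_W(\mu,\nu)/\max(Z_\mu,Z_\nu)$, but this of course does not affect continuity.
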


\begin{proof}
We will prove the continuity of $f$ at an arbitrary point $\mu=\sum_ia_i\delta_{x_i}\in A$.
Define $a:=\sum_i a_i \phi(x_i)>0$, which is positive since $A$ has $\MCP(p,U)$ and since $\phi(x)>0$ for all $x\in U$.
Since $U$ has finite diameter, there exists some constant $D<\infty$ such that $\diam(U)<D$.

Let $\varepsilon >0$.
Choose $0<\delta\le\min(\frac{a\varepsilon}{2},\frac{a}{L})$ sufficently small so that $(1-\frac{a-L\,\delta}{a+L\,\delta})D \le \frac{\varepsilon}{2}$.
Suppose $\nu:=\sum_j b_j\delta_{y_j}\in A$ with $d_W(\mu,\nu) < \delta$.
This means there exist $q_{i,j}\ge 0$ with $\sum_j q_{i,j}=a_i$, with $\sum_i q_{i,j}=b_j$, and with $\sum_{i,j}q_{i,j}d_X(x_i,y_j)<\delta$.
We will bound the 1-Wasserstein distance between the measures $f(\mu)$ and $f(\nu)$ by describing how to transport only part of the mass, and then using the partial coupling lemma.

Define $b:=\sum_j b_j \phi(y_j)>0$.
We define a \emph{partial} transport plan $\{c_{i,j}\}$ between $f(\mu)$ and $f(\nu)$ via 
\[c_{i,j} = q_{i,j}\min\left(\frac{\phi(x_i)}{a},\frac{\phi(y_j)}{b}\right).\]
We have
\begin{align*}
\sum_jc_{i,j} &= \sum_jq_{i,j}\min\left(\frac{\phi(x_i)}{a},\frac{\phi(y_j)}{b}\right) \le \sum_jq_{i,j}\frac{\phi(x_i)}{a} = \frac{a_i\phi(x_i)}{a} \quad\text{and} \\
\sum_i c_{i,j} &= \sum_i q_{i,j}\min\left(\frac{\phi(x_i)}{a},\frac{\phi(y_j)}{b}\right) \le \sum_{i}q_{i,j}\frac{\phi(y_i)}{b} = \frac{b_j \phi(y_j)}{b}.
\end{align*}
This shows that $\{c_{i,j}\}$ is indeed a partial transport plan from $f(\mu)=\sum_i \frac{a_i \phi(x_i)}{a} \delta_{x_i}$ to $f(\nu)=\sum_j \frac{b_j \phi(y_j)}{b} \delta_{y_j}$.
Note that the cost of this partial transport plan is small, namely
\[
\sum_{i,j}c_{i,j}d(x_i,y_j)
= \sum_{i,j}q_{i,j}\min\left(\frac{\phi(x_i)}{a},\frac{\phi(y_j)}{b}\right)d(x_i,y_j)
\le \frac{1}{a}\sum_{i,j}q_{i,j}d(x_i,y_j)
< \frac{\delta}{a}.
\]

To apply the partial coupling lemma, we also need a lower bound on the amount of mass $\sum_{i,j}c_{i,j}$ we have transported.
For this, we will need the bound
\begin{align*}
|a - b| &= \left|\sum_i a_i\phi(x_i)-\sum_j b_j\phi(y_j)\right| 
= \left|\sum_{i,j}q_{i,j}(\phi(x_i)-\phi(y_j))\right| \\
&\le \sum_{i,j}q_{i,j}|\phi(x_i)-\phi(y_j)| 
\le L \sum_{i,j}q_{i,j}d_X(x_i,y_j) < L\,\delta,
\end{align*}
where the second-to-last step is since $\phi$ is $L$-Lipschitz.
Now, the amount of mass transported by our partial transport plan is at least
\begin{align*}
\sum_{i,j}c_{i,j} 
&= \sum_{i,j} q_{i,j}\min\left(\frac{\phi(x_i)}{a},\frac{\phi(y_j)}{b}\right) \\
&\ge \frac{1}{\max(a,b)} \sum_{i,j} q_{i,j}\min\left(\phi(x_i),\phi(y_j)\right) \\
&\ge \frac{1}{a+L\,\delta} \sum_{i,j} q_{i,j}\min\left(\phi(x_i),\phi(y_j)\right) \\
&\ge \frac{1}{a+L\,\delta} \sum_{i,j} q_{i,j}\left(\phi(x_i)-L\,d_X(x_i,y_j)\right) &&\text{since }\phi\text{ is $L$-Lipschitz}\\
&\ge \frac{1}{a+L\,\delta} \left(\sum_{i,j} q_{i,j}\phi(x_i) - L \sum_{i,j} q_{i,j}d_X(x_i,y_j)\right) \\
&\ge \frac{1}{a+L\,\delta} \left(\sum_{i} a_i \phi(x_i) - L\,\delta\right) \\
&= \frac{a-L\,\delta}{a+L\,\delta}.
\end{align*}
By definition, both $f(\mu)$ and $f(\nu)$ have their supports in $U$, which has diameter at most $D$.
Hence we apply the partial coupling lemma (Lemma~\ref{lem:partial-transport}) to get
\begin{align*}
d_W(f(\mu), f(\nu)) &\le \sum_{i,j}c_{i,j}d_X(x_i,y_j) + \left(1-\sum_{i,j}c_{i,j}\right)\diam\left(\supp(f(\mu))\cup\supp(f(\nu))\right) \\
&\le \tfrac{\delta}{a} + (1-\tfrac{a-L\,\delta}{a+L\,\delta}) D \\
&\le \tfrac{\varepsilon}{2} + \tfrac{\varepsilon}{2} \\
&= \varepsilon.
\end{align*}
Hence $f$ is continuous.

We note that for any measure $\mu\in A$ we have $\supp(f(\mu))\subseteq \supp(\mu)\cap U$ since $\phi^{-1}(0)=U^C$.
Therefore, if $A$ is $U$-pumping convex, then the homotopy $H \colon A \times [0, 1] \to A$ defined by $H(\mu, t) = (1 - t)\mu + t f(\mu)$ is well-defined since $\supp(f(\mu))\subseteq \supp(\mu)\cap U$.
This homotopy is also continuous by Lemma~\ref{lem:homotopy1}.
\end{proof}

Let $\cU$ be a uniformly bounded open cover of a metric space $X$. 
We now define the open cover $\wM_\cU$ of $\cVm(\cU)$, before showing that it is good up to level $n$.

Choose $0<p<1$.
Fix $U\in \cU$.
We will define an open neighborhood $\wM_U$ of $M_U$ in $\cVm(\cU)$ in the following inductive manner.
We construct an increasing sequence
\[
M_U = Q_0 \subseteq N_1 \subseteq Q_1 \subseteq N_2 \subseteq Q_2 \subseteq \ldots
\]
so that each element of the sequence has the mass concentration property $\MCP(p,U)$, 
each $N_k$ is open, and 
each $Q_k$ is $U$-pumping convex.
Consequently $\wM_U := \cup_k Q_k = \cup_k N_k$ will be open, be $U$-pumping convex, and have the mass concentration property $\MCP(p,U)$ (since if $\mu\in \wM_U$, then we have $\mu\in Q_k$ for some $k$, and hence $\mu(U)>p$).

Let us now inductively define the sets $Q_k$ and $N_k$.
We set $Q_0:=M_U$, which has $\MCP(p,U)$ and which is $U$-pumping convex.
Fix $k\in \mathbb{N}$.
By induction, each $Q_{k-1}$ has $\MCP(p,U)$, meaning any $\mu \in Q_{k-1}$ satisfies $\mu(U)>p$.
Note $d(\supp(\mu)\cap U,U^C)>0$ since $\supp(\mu)$ is finite and $U$ is open.
If $\nu \in \cVm(\cU)$ is a measure with $\nu(U)\le p$, then the cost of a transport plan from $\mu$ to $\nu$ must be at least as large as the amount of mass from $\mu$ that needs to be moved outside of $U$ times the distance from $\supp(\mu)\cap U$ to the complement of $U$, which is $(\mu(U)-p)\,d(\supp(\mu)\cap U,U^C)$.
Therefore, if we choose $r_\mu>0$ so that $r_{\mu} < (\mu(U)-p)\,d(\supp(\mu)\cap U,U^C)$, then any measure with Wasserstein distance less than $r_\mu$ from $\mu$ has mass totaling more than $\mu(U)-(\mu(U)-p) = p$ inside of $U$.
In other words, the $r_{\mu}$-neighborhood of $\mu$, denoted by $B_{\cVm(\cU)}(\mu; r_{\mu})$, also has $\MCP(p,U)$.
%Check if argument works for infinitely supported case---compact set contains most of measure.
Define $N_k = \bigcup_{\mu \in Q_{k-1}} B_{\cVm(\cU)}(\mu; r_{\mu})$.
Note that $N_k$ is open and has $\MCP(p,U)$.
Define
\[ Q_k = \{(1-t)\mu + t \nu~\mid~\mu\in N_k,\ \nu \in \cPfin{X} \text{ with } \supp(\nu)\subseteq\supp(\mu)\cap U,\ t\in [0,1]\} \subseteq \cVm(\cU). \]
It follows that $Q_k$ has $\MCP(p,U)$ and is $U$-pumping convex.
Then, as we mentioned, we define $\wM_U = \cup_k Q_k = \cup_k N_k$, which is open, is $U$-pumping convex, and has the mass concentration property $\MCP(p,U)$.
Define $\wM_\cU := \{\wM_U\}_{U \in \cU}$.

\begin{proposition}
\label{prop:almostGoodCover}
Let $\cU$ be a uniformly bounded open cover of a metric space $X$, let $n\in \mathbb{N}$ with $n\ge 2$, and fix $1-1/n<p<1$.
If $\wM_\cU$ is constructed using this choice of $p$, then $\wM_\cU$ is an open cover of $\cVm(\cU)$ that is good up to level $n$.
Furthermore, the $n$-skeleta of the nerve complexes $\cN(\wM_\cU)$ and $\cN(\cU)$ coincide.
\end{proposition}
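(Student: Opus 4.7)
The plan is to verify three properties, from which both assertions of the proposition follow: (i) $\wM_\cU$ is an open cover of $\cVm(\cU)$; (ii) for every $\sigma\subseteq\cU$ with $|\sigma|\le n$, the intersection $\wM_\sigma:=\bigcap_{U\in\sigma}\wM_U$ is empty if and only if $U_\sigma:=\bigcap_{U\in\sigma}U$ is empty; and (iii) every nonempty such $\wM_\sigma$ is contractible. Part (i) is immediate from the construction: each $\wM_U=\bigcup_k N_k$ is open and contains $M_U=Q_0$, so $\wM_\cU$ refines the known cover $M_\cU$ of $\cVm(\cU)$. Together, (ii) and (iii) give the good-up-to-level-$n$ claim, and (ii) with the natural identification of vertex sets gives the matching of $n$-skeleta of $\cN(\wM_\cU)$ and $\cN(\cU)$.

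The quantitative heart of the argument is a mass-concentration estimate on intersections. Suppose $\mu\in\wM_\sigma$ with $|\sigma|\le n$. By construction, each $\wM_U$ satisfies $\MCP(p,U)$, so $\mu(U)>p$ for every $U\in\sigma$, and a union bound on complements gives
\[\mu(X\setminus U_\sigma)\le\sum_{U\in\sigma}\mu(U^C)<|\sigma|(1-p)\le n(1-p)<1,\]
where the final strict inequality is exactly the calibration $p>1-1/n$. Hence $\mu(U_\sigma)>p_\sigma:=1-|\sigma|(1-p)>0$, which forces $U_\sigma\ne\emptyset$ and shows that $\wM_\sigma$ has $\MCP(p_\sigma,U_\sigma)$. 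The converse direction of (ii) is trivial: any $x\in U_\sigma$ gives $\delta_x\in M_U\subseteq\wM_U$ for every $U\in\sigma$, so $\delta_x\in\wM_\sigma$.

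For (iii), I will apply the pumping lemma (Lemma~\ref{lem:homotopy2}) to $\wM_\sigma$ with $U=U_\sigma$. This set is open, nonempty by (ii), and of finite diameter (at most $\diam(U)$ for any $U\in\sigma$). The required $\MCP(p_\sigma,U_\sigma)$ was just established, and the $U_\sigma$-pumping convexity of $\wM_\sigma$ is inherited from the $U$-pumping convexity of each $\wM_U$: if $\mu\in\wM_\sigma$, $\supp(\nu)\subseteq\supp(\mu)\cap U_\sigma$, and $t\in[0,1]$, then $\supp(\nu)\subseteq\supp(\mu)\cap U$ for every $U\in\sigma$, so $(1-t)\mu+t\nu\in\wM_U$ for all $U\in\sigma$. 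The pumping lemma then produces a continuous $f\colon\wM_\sigma\to M_{U_\sigma}\subseteq\wM_\sigma$ together with a homotopy inside $\wM_\sigma$ from the identity to $f$. Composing this with the linear contraction of $M_{U_\sigma}$ to $\delta_x$ for any chosen $x\in U_\sigma$ (via Lemma~\ref{lem:homotopy1}) contracts $\wM_\sigma$ to a point.

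The main obstacle that the construction was designed to navigate is the tension among three properties that must all simultaneously survive passage to $\wM_U$: openness (required for an open cover), $U$-pumping convexity (required for the pumping homotopy to stay in the set), and a strict threshold $p>1-1/n$ on the mass concentration (required for the union-bound argument above to force $\mu(U_\sigma)>0$ whenever $|\sigma|\le n$). These three demands explain both the alternating inductive definition $Q_0\subseteq N_1\subseteq Q_1\subseteq\cdots$ and the precise radius $r_\mu<(\mu(U)-p)\,d(\supp(\mu)\cap U,U^C)$ built into the construction, and the verification above is what confirms that they combine exactly as needed to close the argument.
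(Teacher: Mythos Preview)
Your proof is correct and follows essentially the same approach as the paper's: the union-bound mass concentration estimate on $\wM_\sigma$, the inherited $U_\sigma$-pumping convexity, and the application of the pumping lemma to contract $\wM_\sigma$ into $M_{U_\sigma}$ are exactly the steps in the paper. One terminological slip: in (i) you say $\wM_\cU$ ``refines'' $M_\cU$, but you mean the reverse containment $M_U\subseteq\wM_U$ (i.e., $M_\cU$ refines $\wM_\cU$), which is indeed what shows $\wM_\cU$ covers.
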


\begin{proof}
Recall that each set $\wM_U$ is open and has $\MCP(p,U)$ by construction.

Note $1-1/n<p<1$ implies that $0<n(1-p)<1$.
We claim that for any $k\le n$ and $U_1, U_2, \ldots, U_k \in \cU$,
\begin{equation}
\label{eq:wmcp-intersect}
\wM_{U_1}\cap \ldots \cap \wM_{U_k} \text{ has } \MCP\Big(1-n(1-p), U_1 \cap \ldots \cap U_k\Big).    
\end{equation}
Indeed, choose any $\mu \in \wM_{U_1} \cap \ldots \cap \wM_{U_k}$.
Since $\mu(U_i^C) < 1-p$ for each $i$, we have
\[\mu((U_1 \cap \ldots \cap U_k)^C) = \mu(U_1^C \cup \ldots \cup U_k^C) < k(1-p)\le n(1-p).\]

Next, we show that $\wM_\cU$ is a good cover up to level $n$.
For $k\le n$ and for arbitrary sets $U_1,\ldots,U_k\in\cU$, assume that $\wM_{U_1} \cap \ldots \cap \wM_{U_k}$ is nonempty.
Since the finite intersection of open sets $U_1 \cap \ldots \cap U_k$ is open, we can choose a 1-Lipschitz function $\phi \colon X \to [0,1]$ for which $\phi^{-1}(0)=(U_1 \cap \ldots \cap U_k)^C$.
By~\eqref{eq:wmcp-intersect} and Lemma~\ref{lem:homotopy2}, the map $f\colon \wM_{U_1}\cap \ldots \cap \wM_{U_k} \to M_{U_1}\cap \ldots \cap M_{U_k}$ defined as 
\[
f\Big(\sum_i a_i \delta_{x_i}\Big) = \sum_i \left(\frac{a_i \phi(x_i)}{\sum_j a_j \phi(x_j)}\right) \delta_{x_i}
\]
is continuous.
Each $\wM_{U_i}$ is $U_i$-pumping convex and therefore $(U_1 \cap \ldots \cap U_k)$-pumping convex, and so it follows that $\wM_{U_1}\cap \ldots \cap \wM_{U_k}$ is also $(U_1 \cap \ldots \cap U_k)$-pumping convex.
Hence Lemma~\ref{lem:homotopy2} furthermore implies that the homotopy $H \colon \wM_{U_1}\cap \ldots \cap \wM_{U_k} \times [0, 1] \to \cVm(\cU)$ defined by $H(\mu, t) = (1 - t)\mu + t f(\mu)$ is well-defined and continuous.
Therefore, the identity on $\wM_{U_1}\cap \ldots \cap \wM_{U_k}$ is homotopic to the map~$f$, whose image lies in the contractible space $M_{U_1}\cap \ldots \cap M_{U_k}=M_{U_1\cap\ldots\cap U_k}$.
So $\wM_{U_1} \cap \ldots \cap \wM_{U_k}$ is contractible.
This shows that $\wM_\cU$ is a good cover up to level $n$.

Lastly, we show that the $n$-skeleta of the nerve complexes $\cN(\wM_\cU)$ and $\cN(\cU)$ coincide.
Let $k\le n$ and consider arbitrary sets $U_1,\ldots,U_k\in\cU$.
If $x\in U_1 \cap \ldots \cap U_k$ then the Dirac measure at $x$ is contained in $\wM_{U_1} \cap \ldots \cap \wM_{U_k}$.
On the other hand, if $ U_1 \cap \ldots \cap U_k=\emptyset$ then $\wM_{U_1}\cap \ldots \cap \wM_{U_k}$ contains no measure by~\eqref{eq:wmcp-intersect} and by the definition of the mass concentration property, as $1-n(1-p) > 0$.
\end{proof}

\section{Vietoris thickenings and complexes have isomorphic homotopy groups}
\label{sec:main}

We are now prepared to use the open cover from Section~\ref{sec:cover-open}, which is good up to level $n$, to prove that $\cV(\cU)$ and $\cVm(\cU)$ have isomorphic homotopy groups.

\begin{theorem}
\label{thm:main}
If $\cU$ is a uniformly-bounded open cover of a separable metric space $X$, then the Vietoris thickening $\cVm(\cU)$ and the Vietoris complex $\cV(\cU)$ have isomorphic homotopy groups $\pi_k(\cVm(\cU))\cong \pi_k(\cV(\cU))$ in all dimensions $k\ge 0$.
\end{theorem}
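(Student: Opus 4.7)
The plan is to fix an arbitrary $k\ge 0$, pick an integer $n\ge k+2$, choose any $p$ with $1-1/n<p<1$, and use the open cover $\wM_\cU$ of $\cVm(\cU)$ built in Section~\ref{sec:cover-open} with this value of $p$. By Proposition~\ref{prop:almostGoodCover}, $\wM_\cU$ is an open cover that is good up to level $n$, and its $n$-skeleton nerve agrees with the $n$-skeleton of $\cN(\cU)$. The strategy is then to bridge $\cVm(\cU)$ and $\cV(\cU)$ through the chain of comparisons
\[
\cVm(\cU) \; \longrightarrow \; \cN(\wM_\cU) \; \longleftrightarrow \; \cN(\cU) \; \simeq \; \cV(\cU).
\]

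For the first arrow I would invoke a generalized nerve theorem valid for covers that are only good up to a finite level. Nagorko's carrier-theoretic nerve theorem from~\cite{nagorko2007carrier} and the related version of the third author~\cite{virk2021rips} produce a map $\cVm(\cU)\to|\cN(\wM_\cU)|$ that induces isomorphisms on $\pi_j$ for $0\le j\le n-1$, provided the ambient space satisfies the necessary point-set-topology hypotheses (paracompactness and local contractibility). Local contractibility of $\cVm(\cU)$ is the content of Theorem~\ref{thm:locally-contractible}, proved independently in Section~\ref{sec:locally-contractible}; separability of $X$ makes $\cVm(\cU)$ a separable metric space, hence paracompact, yielding the point-set conditions. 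The second arrow is purely combinatorial: homotopy groups $\pi_j$ of a simplicial complex depend only on its $(j+1)$-skeleton, so the equality of $n$-skeleta from Proposition~\ref{prop:almostGoodCover} gives $\pi_j(\cN(\wM_\cU))\cong \pi_j(\cN(\cU))$ for $0\le j\le n-1$. The third step is the Dowker-type equivalence $\cN(\cU)\simeq \cV(\cU)$ from~\cite[Proposition~3.7]{virk2021rips}, already used in Section~\ref{sec:cover-good}.

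Composing these three comparisons at $j=k$ yields $\pi_k(\cVm(\cU))\cong \pi_k(\cV(\cU))$. To match the paper's convention that isomorphisms come with a compatible bijection of path components and matched basepoints, I would take basepoints to be Dirac measures $\delta_{x_i}$ chosen in distinct components of $\cVm(\cU)$, and verify that, under the canonical maps above, $\delta_{x_i}$ is sent to a vertex that corresponds to the vertex $x_i\in \cV(\cU)$ by functoriality at every stage. Since $n$ may be chosen arbitrarily large without affecting the identifications in lower dimensions, we recover the statement for all $k\ge 0$ simultaneously.

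The main obstacle I expect is verifying the precise hypotheses of the generalized nerve theorem in the form needed here. The cover $\wM_\cU$ is indexed by the potentially uncountable set $\cI$ and need not be locally finite, so one must use a version (such as Nagorko's carrier formulation) that tolerates arbitrary indexing sets and only finite-level goodness. The delicate point is assembling the package of conditions on $\cVm(\cU)$ that makes this applicable: separability plus metrizability yields paracompactness, local contractibility is supplied by Theorem~\ref{thm:locally-contractible}, and one must check that the $\wM_U$ are well-behaved enough (for instance, that they admit a partition of unity or a carrier in the sense of~\cite{nagorko2007carrier}) for the nerve theorem to apply. Once these point-set verifications are in place, the rest of the argument is essentially formal.
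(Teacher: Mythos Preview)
Your proposal is correct and follows essentially the same route as the paper: the chain $\cVm(\cU)\to\cN(\wM_\cU)\leftrightarrow\cN(\cU)\simeq\cV(\cU)$, with the first comparison via Nagorko's generalized nerve theorem (using local contractibility from Theorem~\ref{thm:locally-contractible} and separability of $\cVm(\cU)$), the second via the equality of $n$-skeleta and cellular approximation, and the third via Dowker duality. One small sharpening: Nagorko's Theorem~3.4 in~\cite{nagorko2007carrier} is stated for \emph{separable} spaces directly (not paracompact ones), so the relevant point-set input is that $\cVm(\cU)$ inherits separability from $X$---your deduction of paracompactness is true but not the hypothesis actually invoked.
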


\begin{proof}

The organization of the proof is as follows.
Fix an arbitrary integer $n\ge0$.
We can construct a cover $\wM_\cU$ of $\cVm(\cU)$ that is good up to level $n$, and we know from Proposition~\ref{prop:almostGoodCover} that this cover is open.
In Lemma~\ref{lem:nagorko} which follows, we will apply a result from Nagorko~\cite{nagorko2007carrier} to conclude that the first $n-1$ homotopy groups of $\cVm(\cU)$ and $\cN(\wM_\cU)$ are isomorphic, i.e.\ $\pi_k(\cVm(\cU)) \cong \pi_k(\cN(\wM_\cU))$ for all $0\le k\le n-1$.
Since the $n$-skeleta of $\cN(\wM_\cU)$ and $\cN(\cU)$ coincide by Proposition~\ref{prop:almostGoodCover}, it follows from cellular approximation that the first $n-1$ homotopy groups of $\cN(\wM_\cU)$ and $\cN(\cU)$ are isomorphic, i.e.\ $\pi_k(\cN(\wM_\cU)) \cong \pi_k(\cN(\cU))$ for all $0\le k\le n-1$.
Finally, we conclude using the fact that
$\cN(\cU) \simeq \cV(\cU)$ by Dowker Duality~\cite{dowker1952topology,zeeman1962dihomology,virk2021rips}.
Stringing these facts together, we get that
\[ \pi_k(\cVm(\cU)) \cong \pi_k(\cN(\wM_\cU)) \cong \pi_k(\cN(\cU)) \cong \pi_k(\cV(\cU)) \quad\text{for }0\le k\le n-1.\]
Since the integer $n$ can be made arbitrarily large (although with different covers $\wM_\cU$ for different values of $n$), we get that $\cVm(\cU)$ and $\cV(\cU)$ have isomorphic homotopy groups $\pi_k$ for all integers $k\ge 0$.
\end{proof}

It remains to prove Lemma~\ref{lem:nagorko}, which states that first $n-1$ homotopy groups of $\cVm(\cU)$ and $\cN(\wM_\cU)$ are isomorphic.

\begin{lemma}
\label{lem:nagorko}
Let $\cU$ be a uniformly-bounded open cover of a separable metric space $X$, let $n\ge 2$, fix $1-1/n<p<1$, and construct $\wM_\cU$ using this choice of $p$.
Then $\pi_k(\cVm(\cU)) \cong \pi_k(\cN(\wM_\cU))$ for all $0\le k\le n-1$.
\end{lemma}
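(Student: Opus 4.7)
The plan is to invoke directly a generalized nerve theorem of Nagorko~\cite{nagorko2007carrier}, which handles covers that are only good up to a finite level and concludes that the nerve map induces isomorphisms on homotopy groups up through the matching dimension. Our job is essentially a bookkeeping one: check that the hypotheses of the appropriate version of Nagorko's theorem are met by the cover $\wM_\cU$ of $\cVm(\cU)$ produced in the previous section.

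First I would assemble the point-set regularity of the ambient space. Because $X$ is separable and the $1$-Wasserstein metric extends the metric on $X$, the space $\cVm(\cU)$ is itself a separable metric space, hence paracompact; in particular every open cover of it is numerable, so there exist partitions of unity subordinate to $\wM_\cU$. In addition, $\cVm(\cU)$ is locally contractible by the forthcoming Theorem~\ref{thm:locally-contractible}. These two properties—paracompactness plus local contractibility of the carrier—are exactly the ambient hypotheses under which Nagorko's nerve theorem applies.

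Next I would feed in the combinatorial input. Proposition~\ref{prop:almostGoodCover} tells us that $\wM_\cU$ is an open cover of $\cVm(\cU)$ whose intersections $\wM_{U_1}\cap\cdots\cap\wM_{U_k}$ with $k\le n$ are either empty or contractible. This is precisely the ``good up to level $n$'' hypothesis of Nagorko's version of the nerve theorem. Applying it, one obtains that the canonical map from $\cVm(\cU)$ into the nerve $\cN(\wM_\cU)$ (built using a partition of unity subordinate to $\wM_\cU$) is a weak equivalence through dimension $n-1$, which yields
\[
\pi_k\bigl(\cVm(\cU)\bigr)\;\cong\;\pi_k\bigl(\cN(\wM_\cU)\bigr)\qquad\text{for all }0\le k\le n-1,
\]
as claimed. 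A basepoint bookkeeping step is needed to handle the several path-components simultaneously, but vertices of $\cN(\wM_\cU)$ map to Dirac measures in $\cVm(\cU)$, so compatible basepoints in corresponding components are easy to choose.

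The main obstacle is really conceptual rather than technical: one must locate and apply the correct form of Nagorko's theorem. The version that matches our setting needs only that the cover be open, numerable, and good up to level $n$, with the ambient space being a paracompact locally contractible space. The hypothesis on local contractibility of $\cVm(\cU)$ is nontrivial and is deferred to Theorem~\ref{thm:locally-contractible}; everything else here is routine once that is in hand.
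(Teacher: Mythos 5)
Your proposal is correct and follows essentially the same route as the paper: invoke Nagorko's generalized nerve theorem~\cite{nagorko2007carrier} for the canonical map $\cVm(\cU)\to\cN(\wM_\cU)$, verifying its hypotheses via separability of $\cVm(\cU)$, local contractibility from Theorem~\ref{thm:locally-contractible}, and the good-up-to-level-$n$ property of $\wM_\cU$ from Proposition~\ref{prop:almostGoodCover}. The only cosmetic difference is that the paper checks Nagorko's condition in the form ``separable plus weakly regular for at most $n$-dimensional spaces'' (reduced, via local $(n-1)$-connectedness, to the statement that nonempty intersections of $\cA\subseteq\wM_\cU$ are $(n-|\cA|)$-connected), whereas you phrase it as paracompactness/numerability plus goodness up to level $n$; the substantive inputs are identical.
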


\begin{proof}
We will apply Theorem~3.4 of~\cite{nagorko2007carrier} by Nagorko, which says that if $\cF$ is an open cover of a separable space $Y$, weakly regular for the class of at most $n$-dimensional spaces, then each canonical map $Y \to \cN(\cF)$ (induced by a partition of unity subordinated to this cover) produces isomorphisms on homotopy groups of dimensions less than $n$.
We refer the reader to~\cite{nagorko2007carrier} for the definition of \emph{weakly regular for the class of at most $n$-dimensional spaces}, since we will not need it here.
Indeed, two paragraphs after Theorem~3.4, Nagorko states that the Excision Theorem implies that an open cover $\cF$ of a locally $(n-1)$-connected space is weakly regular for the class of at most $n$-dimensional spaces if and only if each nonempty intersection of a collection $\cA\subseteq\cF$ is $(n-|\cA|)$-connected.
This is the condition we will verify.

We apply~\cite[Theorem~3.4]{nagorko2007carrier} to the map $\cVm(\cU)\to\cN(\wM_\cU)$.
First, note that $\cVm(\cU)$ is separable since $X$ is separable; we prove this in Lemma~\ref{lem:separable} which follows.
To see that $\cVm(\cU)$ is locally $(n-1)$-connected, note that it is furthermore locally contractible by Theorem~\ref{thm:locally-contractible} in Section~\ref{sec:locally-contractible} since the cover $\cU$ is uniformly bounded.
This is the only place where our assumption that the cover $\cU$ is uniformly bounded is used.
Finally, to see that the nonempty intersection of a collection $\cA\subseteq\cF$ is $(n-|\cA|)$-connected, note this follows from Proposition~\ref{prop:almostGoodCover}, which says that intersections up to level $n$ are empty or contractible.
\end{proof}

Let us prove Lemma~\ref{lem:separable} about separability that we used above.

\begin{lemma}
\label{lem:separable}
If $\cU$ is a cover of a separable metric space $X$, then $\cVm(\cU)$ is separable.
\end{lemma}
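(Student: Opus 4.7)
Plan: The natural approach is to reduce the claim to separability of the ambient space $(\cPfin{X}, d_W)$ of all finitely supported probability measures on $X$, and then invoke the hereditary character of separability for metric spaces, rather than try to construct a countable dense subset of $\cVm(\cU)$ directly.

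For the ambient claim, let $D \subseteq X$ be a countable dense subset, which exists by separability of $X$. I would show that the countable set
\[ S = \left\{ \textstyle\sum_{i=0}^k q_i \delta_{d_i} \,\middle|\, k \in \N,\ d_i \in D,\ q_i \in \Q_{>0},\ \sum_i q_i = 1 \right\} \]
is $d_W$-dense in $\cPfin{X}$. Given $\mu = \sum_{j} a_j \delta_{x_j} \in \cPfin{X}$ and $\varepsilon > 0$, the approximation would proceed in two steps, combined via the triangle inequality. First, choose $d_j \in D$ close enough to each $x_j$ that $\sum_{j} a_j\, d(x_j, d_j) < \varepsilon/2$; then transporting mass $a_j$ from $x_j$ to $d_j$ realizes a coupling that gives $d_W(\mu, \sum_j a_j \delta_{d_j}) < \varepsilon/2$. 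Second, pick rationals $q_j \in \Q_{>0}$ with $\sum q_j = 1$ and $|a_j - q_j|$ sufficiently small; the coupling that assigns mass $\min(a_j, q_j)$ at each $d_j$ and distributes the remainder arbitrarily shows $d_W(\sum_j a_j \delta_{d_j}, \sum_j q_j \delta_{d_j}) \le \tfrac{1}{2}\sum_j |a_j - q_j|\,\diam(\{d_0, \ldots, d_k\})$, which can be made smaller than $\varepsilon/2$ since the diameter is a fixed finite number.

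Having established that $(\cPfin{X}, d_W)$ is separable, I would invoke heredity: every separable metric space is second countable, second countability passes to arbitrary subspaces, and every second-countable space is separable. Since $\cVm(\cU) \subseteq \cPfin{X}$ is a subspace in the $d_W$ metric, it is separable.

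The only subtle point worth flagging is why one cannot just take $S \cap \cVm(\cU)$ as the desired countable dense subset of $\cVm(\cU)$: the approximating supports $\{d_0, \ldots, d_k\} \subseteq D$ need not lie in any single $U \in \cU$, so $S$ itself may have trivial intersection with $\cVm(\cU)$. The heredity argument sidesteps this issue by producing the countable dense subset abstractly from a countable basis of $\cPfin{X}$ restricted to $\cVm(\cU)$. I do not expect any genuine obstacle here; the only real content of the lemma is the Wasserstein-separability estimate of the ambient space.
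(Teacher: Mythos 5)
Your proof is correct, but it takes a genuinely different route from the paper's. The paper constructs a countable dense subset directly inside $\cVm(\cU)$: it fixes a countable dense set $Z\subseteq X$ and takes those measures of $\cVm(\cU)$ having rational weights and support in $Z$, with density asserted from the density of $Z$ in $X$ and of $\Q$ in $\R$. You instead prove that the ambient space $(\cPfin{X},d_W)$ is separable---by the same rational-weights-on-a-countable-dense-set approximation, whose two-step Wasserstein estimate (first move the supports, then adjust the weights) is exactly the computation the paper leaves implicit---and then conclude by heredity of separability for metric spaces via second countability. What your detour buys is precisely the subtlety you flag: a measure supported on $Z$ that approximates $\mu\in\cVm(\cU)$ need not have its support inside a single element of $\cU$, so for a general (not necessarily open) cover the naive countable candidate set inside $\cVm(\cU)$ can fail to be dense; for instance, with $X=\R$, $Z=\Q$, and the cover of $\R$ by the pairs $\{x,x+\sqrt{2}\}$, the only rational-weight, $Z$-supported measures lying in $\cVm(\cU)$ are Dirac measures at rationals, which are not dense in $\cVm(\cU)$. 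The paper's direct construction works without trouble in all of its applications, where $\cU$ is an open cover (one can then choose the points of $Z$ inside the same $U$ that carries $\supp(\mu)$), and it has the advantage of exhibiting an explicit countable dense subset of $\cVm(\cU)$; your heredity argument is marginally longer but more robust, since it proves the lemma exactly as stated, with no openness assumption on $\cU$ and no care needed in the choice of $Z$.
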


\begin{proof}
Since $X$ is separable, consider a countable dense subset $Z\subseteq X$.
Now define the subset \[ \cVm_{Z,\Q}(\cU):=\left\{\sum_i a_i\delta_{x_i}\in \cVm(\cU)~|~a_i\in\Q \text{ and } x_i\in Z \text{ for all }i\right\}. \]
Since $Z$ is countable, since $\Q$ is countable, and since each measure in $\cVm_{Z,\Q}(\cU)$ has finite support, it follows that $\cVm_{Z,\Q}(\cU)$ is countable.
The fact that $\cVm_{Z,\Q}(\cU)$ is dense in $\cVm(\cU)$ follows since $Z$ is dense in $X$ and $\Q$ is dense in $\R$.
\end{proof}

Finally, in the following section, we explain why $\cVm(\cU)$ is locally contractible when $\cU$ is a uniformly-bounded open cover.

\section{Vietoris metric thickenings are locally contractible}
\label{sec:locally-contractible}

Five years ago, the first two authors tried to show that the metric thickening $\vrm{X}{r}$ is locally contractible to include as a result in~\cite{AAF}, but they did not succeed.
The following theorem answers this matter in the affirmative, and more generally holds for any uniformly-bounded open cover.

\begin{theorem}
\label{thm:locally-contractible}
If $\cU$ is a uniformly-bounded open cover of a metric space $X$, then $\cVm(\cU)$ is locally contractible.
\end{theorem}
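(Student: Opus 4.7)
The plan is, given any $\mu \in \cVm(\cU)$ and any open neighborhood $V$ of $\mu$, to construct an open sub-neighborhood $W \subseteq V$ of $\mu$ together with an explicit contraction $H \colon W \times [0,1] \to V$ of $W$ to the constant map at $\mu$. Writing $\mu = \sum_{i=0}^k a_i\delta_{x_i}$, pick $U \in \cU$ with $\supp(\mu) \subseteq U$; since $\supp(\mu)$ is finite and $U$ is open, $R := d(\supp(\mu), U^C) > 0$. Fix $\varepsilon > 0$ with $B_{\cVm(\cU)}(\mu;\varepsilon) \subseteq V$, and let $D$ be the uniform diameter bound on $\cU$.

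For parameters $\delta \in (0,\varepsilon)$ and $p \in (0,1)$ close to $1$, both to be chosen at the end, take $W := B_{\cVm(\cU)}(\mu;\delta) \cap \{\nu \in \cVm(\cU) : \nu(U) > p\}$. This $W$ is open (the second factor is open by lower semicontinuity of $\nu \mapsto \nu(U)$ under Wasserstein convergence for the open set $U$), contains $\mu$, lies inside $V$, and has $\MCP(p,U)$. Lemma~\ref{lem:homotopy2} then yields a continuous pumping map $f \colon W \to M_U$ built from a Lipschitz function $\phi\colon X \to [0,1]$ with $\phi^{-1}(0) = U^C$; I would take $\phi(x) = \min(2\,d(x,U^C)/R,\,1)$, which is $\tfrac{2}{R}$-Lipschitz, satisfies $\phi \equiv 1$ on the inner region $\{x : d(x,U^C) \ge R/2\} \supseteq \supp(\mu)$, and has the key feature that $\{0 < \phi < 1\}$ sits at distance at least $R/2$ from $\supp(\mu)$.

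The homotopy is the concatenation of two linear stages joined at $t=\tfrac{1}{2}$:
\[H(\nu,t) = \begin{cases}(1-2t)\,\nu + 2t\,f(\nu) & 0 \le t \le \tfrac{1}{2},\\ (2-2t)\,f(\nu) + (2t-1)\,\mu & \tfrac{1}{2} \le t \le 1.\end{cases}\]
Stage~1 lies in $\cVm(\cU)$ because the intermediate measure has support $\supp(\nu) \cup \supp(f(\nu)) = \supp(\nu)$, already contained in some $U' \in \cU$. Stage~2 lies in $M_U \subseteq \cVm(\cU)$ because both $f(\nu)$ and $\mu$ are supported in $U$, so the path stays within the convex subset $M_U$. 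Continuity of each stage follows from Lemma~\ref{lem:homotopy1}, the two formulas agree at $t=\tfrac{1}{2}$, and the endpoints are $H(\nu,0) = \nu$ and $H(\nu,1) = \mu$.

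The hard part---and the step where uniform boundedness is used essentially---is verifying that $H$ maps into $V$, i.e.\ that $d_W(H(\nu,t),\mu) < \varepsilon$ for every $(\nu,t) \in W \times [0,1]$. By convexity of $d_W$ in each argument this reduces to estimating $d_W(f(\nu),\mu) \le d_W(f(\nu),\nu) + \delta$. Because $\nu$ and $f(\nu)$ both have support in a common $U' \in \cU$ of diameter at most $D$, Lemma~\ref{lem:partial-transport} bounds $d_W(f(\nu),\nu)$ by $D$ times the total amount of mass that must be moved in passing from $\nu$ to $f(\nu)$. That moved mass splits into the $\nu$-mass outside $U$, which is at most $1-p$ by $\MCP(p,U)$, plus a redistribution within $U$ supported on $\{0 < \phi < 1\}$; since the latter region sits at distance at least $R/2$ from $\supp(\mu)$, a standard Wasserstein estimate bounds its $\nu$-mass by $2\delta/R$. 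Choosing $p$ close enough to $1$ and $\delta$ small enough compared to $R$, $D$, and $\varepsilon$ then forces $d_W(H(\nu,t),\mu) < \varepsilon$, completing the argument.
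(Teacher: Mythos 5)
Your proof is correct and follows essentially the same route as the paper's: a two-stage homotopy that first pumps mass into an open set containing $\supp(\mu)$ via Lemma~\ref{lem:homotopy2} and then contracts linearly to $\mu$, with well-definedness and the distance estimates handled by Lemma~\ref{lem:homotopy1}, the partial coupling lemma (Lemma~\ref{lem:partial-transport}), and a mass concentration property. The only differences are bookkeeping: the paper pumps into a small union of balls $Y_2$ around $\supp(\mu)$ with $\phi\equiv 1$ on a smaller union $Y_1$, so the transported mass is bounded by $1-p$ alone, whereas you pump into all of $U$ and add a Markov-type bound of $2\delta/R$ for the $\nu$-mass in the transition region $\{0<\phi<1\}$, and you obtain the mass concentration property by intersecting with the superlevel set $\{\nu : \nu(U)>p\}$ rather than by shrinking the ball radius.
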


\begin{proof}
Let $\mu\in\cVm(\cU)$ and let $s>0$ be arbitrary.
It suffices to show that there exists some $0<s'<\frac{s}{2}$ such that $B_{\cVm(\cU)}(\mu;s')$ is contractible in $B_{\cVm(\cU)}(\mu;s)$.
Let $\supp(\mu)=\{y_0,\ldots,y_k\}\subseteq U\in\cU$.
Choose $0<\varepsilon<\frac{1}{2}d_X(\supp(\mu),U^C)$.
%\note{I changed $\frac{1}{2}$ in the definition of $\varepsilon$ to $\frac{1}{2}$, since I think that is all that is needed.}
Let $Y_1=\cup_i B_X(y_i;\varepsilon)$ and let $Y_2=\cup_i B_X(y_i;2\varepsilon)$.
These sets are open in $X$ and are shown in Figure~\ref{fig:Y1Y2}.
The choice of $\varepsilon$ implies $Y_2\subseteq U$, and hence $M_{Y_2}\subseteq\cVm(\cU)$.
Fix $0<p<1$ large enough so that $(1-p)D<\frac{s}{2}$, where $D<\infty$ is the uniform bound such that $\diam(U)<D$ for all $U\in \cU$.
Choose $s' < \frac{s}{2}$ so that $B_{\cVm(\cU)}(\mu;s')$ has $\MCP(p,Y_1)$.\footnote{One choice that suffices is to pick $s' < \frac{s}{2}$ to furthermore satisfy $s'<(1-p)\varepsilon$, since then $\nu\in B_{\cVm(\cU)}(\mu;s')$ implies that some transport plan between $\mu$ and $\nu$ has cost less than $(1-p)\varepsilon$, which means that less than $1-p$ of the mass in $\nu$ can be outside of $Y_1=\cup_i B_X(y_i;\varepsilon)$.}
Fix a $\frac{1}{\varepsilon}$-Lipschitz function $\phi\colon X\to[0,1]$ with $\phi^{-1}(1)=Y_1$ and $\phi^{-1}(0)=Y_2^C$, which is possible\footnote{For example, define $\phi(x)=\frac{d(x,Y_2^C)}{d(x,Y_1)+d(x,Y_2^C)}$.}
since $d(Y_1,Y_2^C)\ge\varepsilon$.
Define $f\colon B_{\cVm(\cU)}(\mu;s')\to M_{Y_2}\subseteq \cVm(\cU)$ by
\[
f\Big(\sum_i a_i \delta_{x_i}\Big) = \sum_i \left(\frac{a_i \phi(x_i)}{\sum_j a_j \phi(x_j)}\right) \delta_{x_i}
\]
This function is continuous by Lemma~\ref{lem:homotopy2}, and it maps into $M_{Y_2}$ since $\phi^{-1}(0)=Y_2^C$.
Also, note that $\phi(x_i)\le 1$ for all $x_i$ and $\phi(x_i)=1$ if $x_i\in Y_1$, which together imply that $f(\nu)(x_i)\ge \nu(x_i)$ for $x_i\in Y_1$.

\begin{figure}[h]
\centering
\includegraphics[width=4in]{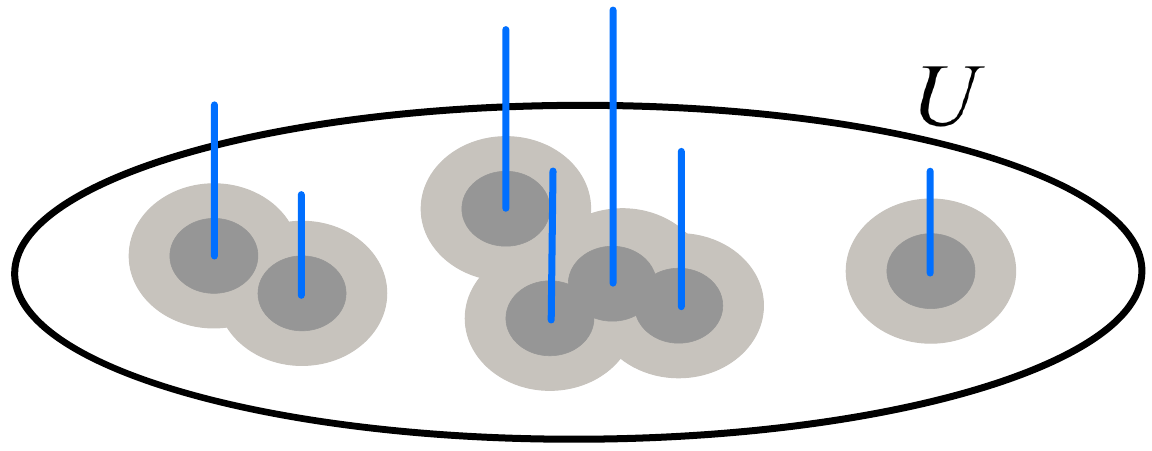}
\caption{A drawing for the proof of Theorem~\ref{thm:locally-contractible} when $X=\R^2$.
Measure $\mu$ is blue, $U$ is black, $Y_2$ is light gray, and $Y_1$ is dark gray.}
\label{fig:Y1Y2}
\end{figure}

Define a homotopy $H\colon B_{\cVm(\cU)}(\mu;s')\times [0,1] \to B_{\cVm(\cU)}(\mu;s)$ by
\[ H(\nu,t)=(1-t)\nu+tf(\nu). \]
By Lemma~\ref{lem:homotopy1}, the homotopy $H$ is continuous so long as it is well-defined, which we confirm now.
First note that $H(\nu,t)\in M_{Y_2}\subseteq \cVm(\cU)$.
For $H$ to be well-defined, we must also show that
$H(\nu,t)\in B_{\cVm(\cU)}(\mu;s)$, i.e.\ that
$d_W(H(\nu,t),\mu) < s$, for all $\nu\in B_{\cVm(\cU)}(\mu;s')$ and $t\in [0,1]$.
Since $B_{\cVm(\cU)}(\mu;s')$ has $\MCP(p,Y_1)$, we know that $\nu(Y_1) > p$.
Furthermore, $f(\nu)(x_i)\ge \nu(x_i)$ for $x_i\in Y_1$ implies that $H(\nu,t)(x_i)\ge \nu(x_i)$ for all $x_i\in Y_1$ and $t\in [0,1]$.
We claim that $d_W(H(\nu,t),\nu)\le (1-p)D$.
Indeed, a transport plan from $H(\nu,t)$ to $\nu$ can leave mass $\nu(x_i)$ at each point $x_i\in Y_1$, and hence only needs to move mass totaling $\nu(Y_1^C) < 1-p$ some distance at most $D$ (since $\supp(H(\nu,t))\subseteq\supp(\nu)\subseteq V$ for some $V\in \cU$, which means $\diam(V)\le D$).
Therefore,
\begin{align*}
d_W(H(\nu,t),\mu)
&\le d_W(H(\nu,t),\nu)+d_W(\nu,\mu) \\
&\le (1-p)D + s' \\
&<\tfrac{s}{2}+\tfrac{s}{2} \\
&= s.
\end{align*}

Finally, we show that the inclusion map of $\im(H(\cdot,1))=\im(f)$ into $B_{\cVm(\cU)}(\mu;s)$ is nullhomotopic to the constant map to $\mu$.
Indeed, define $F\colon \im(f)\times [0,1]\to B_{\cVm(\cU)}(\mu;s)$ by
\[ F(\omega,t)=(1-t)\omega+t\mu. \]
Note 
$F(\omega,t)\in M_{Y_2}\subseteq \cVm(\cU)$ and
\[ d_W(F(\omega,t),\mu) \le (1-t)d_W(\omega,\mu) + t d_W(\mu,\mu) < (1-t)s \le s, \]
where the first inequality above follows from~\cite[Lemma~4.2]{AMMW}, or more generally, from~\cite[Theorem 4.8]{villani2008optimal}.
So $F$ is well-defined, and continuous by Lemma~\ref{lem:homotopy1}.

Together, the homotopies $H$ and $F$ show that $B_{\cVm(\cU)}(\mu;s')$ is contractible in $B_{\cVm(\cU)}(\mu;s)$, as desired.
\end{proof}

\begin{figure}[h]
\centering
\includegraphics[width=3in]{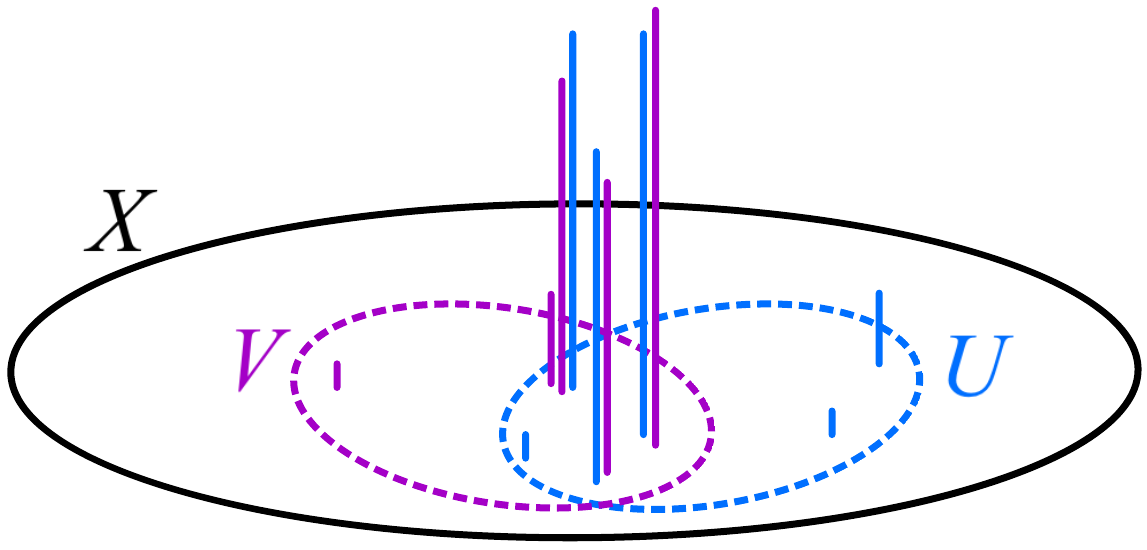}
\caption{A measure $\mu$ with $\supp(\mu)\in U\in \cU$ in blue, and a measure $\nu\in B_{\cVm(\cU)}(\mu;s')$ with $\supp(\nu)\in V\in \cU$.
We cannot linearly homotope from $\mu$ directly to $\nu$ since there may be no set in $\cU$ containing $\supp(\mu)\cup\supp(\nu)$.
}
\label{fig:LinearNotDefined}
\end{figure}

\begin{remark}
The reason why we need two homotopies $H$ and $F$, and cannot linearly homotope from $\mu$ directly to $\nu$, is that $\nu\in B_{\cVm(\cU)}(\mu;s')$ need not imply that there is any set in $\cU$
that contains $\supp(\mu)\cup\supp(\nu)$.
Indeed, see Figure~\ref{fig:LinearNotDefined}.
\end{remark}

\section{Conclusion}
\label{sec:conclusion}

We end with some open questions.
By choosing the cover $\cU$ appropriately, all of the questions in this section have consequences for Vietoris--Rips and \v{C}ech complexes and thickenings.

%From our proof relying on~\cite{nagorko2007carrier}, for any integer $n$ we produce a map between $\cV(\cU)$ and $\cVm(\cU)$ that induces isomorphisms on homotopy groups $\pi_k$ for all $0\le k\le n-1$.
%Even though $n$ is arbitrary, we do not obtain a single map that works for all $n$.
A continuous map between topological spaces is a \emph{weak equivalence} if it induces an isomorphism on all homotopy groups.
Even though we show that the Vietoris thickening $\cVm(\cU)$ and the Vietoris complex $\cV(\cU)$ have isomorphic homotopy groups in all dimensions, we do not find a map inducing these isomorphisms.

\begin{question}
\label{ques:homotopy-equiv}
If $\cU$ is a uniformly bounded open cover of a separable metric space $X$, then are the Vietoris thickening $\cVm(\cU)$ and the Vietoris complex $\cV(\cU)$ weakly homotopy equivalent?
Are they homotopy equivalent?
Are more hypotheses on $X$ or $\cU$ needed, or do fewer hypotheses suffice?
\end{question}

Theorem~\ref{thm:main} states that $\cV(\cU)$ and $\cVm(\cU)$ have isomorphic homotopy groups, and Question~\ref{ques:homotopy-equiv} asks if they are weakly homotopy equivalent or homotopy equivalent.
We now ask if a particular natural map realizes these relationships.
Recall from the end of Section~\ref{sec:prelim} that there is a natural bijection $f\colon \cV(\cU)\to\cVm(\cU)$ from the Vietoris complex to the Vietoris thickening, obtained by assigning a point in the geometric realization of $\cV(\cU)$ associated to the simplex $\{x_0,\ldots,x_k\}$ with barycentric coordinates $(a_0,\ldots,a_k)$ to the measure $\sum_{i=0}^k a_i\delta_{x_i}\in\cVm(\cU)$.
By~\cite[Proposition~6.1]{AAF} the map $f$ is continuous, but its inverse $f^{-1}$ may be discontinuous when $X$ is infinite.

\begin{question}
If $\cU$ is a uniformly bounded open cover of a separable metric space $X$, then does the natural map $f\colon \cV(\cU)\to\cVm(\cU)$ from the Vietoris complex to the Vietoris thickening induce an isomorphism on all homotopy groups, and furthermore is $f$ a homotopy equivalence?
\end{question}

Whitehead's theorem states that a weak homotopy equivalencee between CW complexes is a homotopy equivalence. 
Therefore, an affirmative answer to either of the following questions would be useful towards showing that if $\cVm(\cU)$ and $\cV(\cU)$ are weakly homotopy equivalent, then they are also homotopy equivalent, by using Whitehead's theorem.

\begin{question}
If $\cU$ is a uniformly bounded open cover of a separable metric space $X$, then is $\cV(\cU)$ an absolute neighborhood retract (ANR)~\cite{borsuk1932klasse,nhu1989probability}?
Every ANR has the homotopy type of a CW complex~\cite[Theorem~5.2.1]{fritsch1990cellular}.
\end{question}
% Connection to CW complexes.
%\note{For homotopy equivalence, need only Nagorko type result for covers of complexes, and that metric thickening is ANR; keep working towards this.}

\begin{question}
If $X$ is a metrizable CW complex and if $r>0$, then are $\vrm{X}{r}$ and $\cechm{X}{r}$ homotopy equivalent to CW complexes?
\end{question}

In this paper we have focused on homotopy groups, but similar questions can be asked about how the homology groups of $\cVm(\cU)$ and $\cV(\cU)$ relate.
Good covers up to level $n$ could potentially be included in a Mayer--Vietoris spectral sequence argument to show that homology groups are isomorphic up to dimension $n-1$; see~\cite[\S 15]{bott1982differential} and~\cite{brown2012cohomology,cardona,dugger2004topological}.
%\url{https://www.albany.edu/~rc782885/talk/2018-10-october-ami/2018-10-october-ami.pdf}
%\url{https://uregina.ca/~franklam/Math527/Math527_0325.pdf}

\begin{question}
If $\cU$ is a uniformly bounded open cover of a separable metric space $X$, then do the Vietoris thickening $\cVm(\cU)$ and the Vietoris complex $\cV(\cU)$ have the same homology groups?
\end{question}

\noindent This question has recently been answered in the affirmative by Patrick Gillespie, and moreover the assumption of separability is not needed; we refer the reader to~\cite{gillespie2022homological}.

In addition to homology, one can ask about \emph{persistent} homology.
For $X$ a totally bounded metric space, it is shown in~\cite{AMMW,MoyMasters} that the Vietoris--Rips simplicial complex filtration $\vr{X}{-}$ and the Vietoris--Rips metric thickening filtration $\vrm{X}{-}$ are $\varepsilon$-interleaved for any $\varepsilon>0$.
The same is true for the \v{C}ech filtrations $\cech{X}{-}$ and $\cechm{X}{-}$.
It follows that $\vr{X}{-}$ and $\vrm{X}{-}$ have the same persistent homology and persistent homotopy groups, and similarly for $\cech{X}{-}$ and $\cechm{X}{-}$.
To be more explicit, in the Vietoris--Rips case, this means that for any $r<r'$, the inclusions $\vr{X}{r}\hookrightarrow \vr{X}{r'}$ and $\vrm{X}{r}\hookrightarrow \vrm{X}{r'}$ induce isomorphic images on homology $H_k$ and on homotopy groups $\pi_k$ for all $k\ge 0$, i.e.,
\begin{align*}
\im\Bigl(H_k\bigl(\vr{X}{r}\bigr)\to H_k\bigl(\vr{X}{r'}\bigr)\Bigr)&\cong\im\Bigl(H_k\bigl(\vrm{X}{r}\bigr)\to H_k\bigl(\vrm{X}{r'}\bigr)\Bigr)\quad\text{and} \\
\im\Bigl(\pi_k\bigl(\vr{X}{r}\bigr)\to \pi_k\bigl(\vr{X}{r'}\bigr)\Bigr)&\cong\im\Bigl(\pi_k\bigl(\vrm{X}{r}\bigr)\to \pi_k\bigl(\vrm{X}{r'}\bigr)\Bigr)\quad\text{for any }r<r'.
\end{align*}

\begin{question}
Do our results hold for variants of the metric thickening that allow for infinitely supported measures (see~\cite{AMMW}, for example)?
\end{question}
% Watch out for first bullet in inductive definition in tilde MU sets.}

\section*{Acknowledgements}

The authors would like to thank Robert Cardona for helpful conversations.
This research was supported through the program ``Research in Pairs" by the Mathematisches Forschungsinstitut Oberwolfach in 2019.
The second author was supported by NSF grant DMS 1855591, NSF CAREER grant DMS 2042428, and a Sloan Research Fellowship.
The third author was supported by Slovenian Research Agency grants No.\ N1-0114 and P1-0292.
The first and third authors would like to thank the Institute of Science and Technology Austria (ISTA) for hosting research visits.

\bibliographystyle{plain}
\bibliography{VietorisThickeningsAndComplexesHaveIsomorphicHomotopyGroups.bib}

\end{document}